\theoremstyle{plain}
\newtheorem{thm}{Theorem}[section] 
\theoremstyle{plain}
\newtheorem{Prop}[thm]{Proposition}
\newtheorem{lemma}[thm]{Lemma}
\newtheorem{Cor}[thm]{Corollary}
\theoremstyle{definition}
\theoremstyle{remark}
\newtheorem{Rem}[thm]{Remark}
\newtheorem{Def}[thm]{Definition}
\DeclareMathOperator*{\osc}{osc}
\begin{document}
\title[A CLT for the Riemann zeta-function over a Boolean type transformation]
 {A Central limit theorem for the Birkhoff sum of the Riemann zeta-function over a Boolean type transformation}
\author[Schindler]{Tanja I. Schindler}
%
\keywords{central limit theorem, Riemann zeta-function, transfer operator, Boolean transformation}
 \subjclass[2010]{
    Primary:  11M06
    Secondary: 37A05, 37A30, 37A45, 60G10, 60F05}
 \date{\today}
\begin{abstract}
We prove a central limit theorem for the real and imaginary part and the absolute value of the Riemann zeta-function sampled along a vertical line in the critical strip with respect to an ergodic transformation similar to the Boolean transformation.
This result complements a result by Steuding who has proven a strong law of large numbers for the same system.
As a side result we state a general central limit theorem for a class of unbounded observables on the real line over the same ergodic transformation. The proof is based on the transfer operator method.
\end{abstract}
\maketitle

\section{Introduction and statement of main results}

In recent years there has been some interest to sample the Riemann zeta-function $\zeta\left(z\right)=\sum_{n=1}^{\infty}1/n^z$ along a vertical line $z=s+i\mathbb{R}$ for fixed $s\in(0,1)$. 
More precisely, for fixed $s$ we look on the limit of
\begin{align}
 \frac{\sum_{k=0}^{n-1}\zeta\left(s+i\cdot Z_k\right)}{n},\label{eq: def eq strong law}
\end{align}
where $Z_n$ is a sequence of random variables taking values in $\mathbb{R}$.

The first investigation was done by Lifshits and Weber in \cite{lifshits_sampling_2009}, 
where $s$ is fixed to be $1/2$ and $(Z_k)$ is set to be a Cauchy random walk, i.e.\
we define $(X_k)$ to be a sequence of independent, identically distributed random variables, obeying a Cauchy distribution, and $Z_k$ as the sum $Z_k\coloneqq \sum_{j=1}^k X_j$. 
For this system it was proven that almost surely \eqref{eq: def eq strong law} equals $1$.
These work was later generalized by Shirai, see \cite{shirai_variance_2008}, where $(Z_k)$ was supposed to be a symmetric $\alpha$-stable process with $\alpha\in [1,2]$.

Related to this work Steuding was considering $(Z_k)$ as the orbit of an ergodic transformation, see \cite{steuding_sampling_2012}.
To be more precise, we consider $\phi\colon \mathbb{R}\to\mathbb{R}$ being defined as
\begin{align}
 \phi(x)\coloneqq\begin{cases} \frac{1}{2}\cdot \left(x-\frac{1}{x}\right)&\text{if }x\neq 0\\
               0&\text{if }x=0
              \end{cases}\label{eq: def T}
\end{align}
and set $Z_k$ to be the $k-1$-th iterate of $\phi$, i.e.\ $Z_k(x)\coloneqq \phi^{k-1}(x)$.
Steuding proved that for almost all $x\in\mathbb{R}$ a finite limit of \eqref{eq: def eq strong law} exists also for this choice $(Z_k)$. 
The ergodic transformation $\phi$ is related to the classical Boolean transformation $\widetilde{\phi}\colon \mathbb{R}\to\mathbb{R}$ given by $\widetilde{\phi}\left(x\right)\coloneqq x-\frac{1}{x}$ if $x\neq 0$ and $\widetilde{\phi}\left(0\right)\coloneqq 0$. 
However, these two transformations are fundamentally different in the sense that $\phi$ is measure preserving and ergodic with respect to the probability measure $\mu$ with $\mu(x)=\mathrm{d}x/(\pi\cdot (1+x^2))$, for details see for example \cite{prykarpatski_ergodic_2015}, 
whereas $\widetilde{\phi}$ is measure preserving and ergodic with respect to the infinite, but $\sigma$-finite Lebesgue measure $\lambda$ on $\mathbb{R}$, see \cite{adler_ergodic_1973}.

From this point of view it makes sense to look at the transformation $\phi$ and not on $\widetilde{\phi}$ as the limit in \eqref{eq: def eq strong law} might not exist for $Z_k(x)\coloneqq {\widetilde{\phi}}^{k-1}(x)$ and almost all $x\in\mathbb{R}$. 
Particularly, the limit cannot exist if we look at $\sum_{k=1}^{n}\left|\zeta\left(s+i\cdot Z_k\right)\right|/n$ instead of the expression in \eqref{eq: def eq strong law}.
This follows immediately from \cite{aaronson_ergodic_1977}.

The results by Steuding have also been generalized, both in terms of observables replacing the Riemann zeta-function as well as in terms of transformations replacing $\phi$: Elaissaoui and Guennoun used $\left|\log\zeta\right|$ as the observable and a slight variation of $\phi$, see \cite{elaissaoui_logarithmic_2015}. Furthermore, Lee and Suriajaya studied a number of different kinds of meromorphic functions like Dirichlet $L$-functions or Dedekind-$\zeta$-functions and $\overline{\phi}$ an affine version of $\phi$, namely $\overline{\phi}(x)= \alpha/2\cdot \left((x+\beta)/\alpha-\alpha/(x-\beta)\right)$, for $\alpha>0$, $x\neq \beta$, see \cite{lee_ergodic_2017}. Finally, Maugmai and Srichan gave further generalizations of $\overline{\phi}$, see \cite{maugmai_mean-value_2019}. These transformations have been earlier studied in another context by Ishitani and Ishitani in \cite{ishitani_invariant_2007}.

The interest in theses kinds of strong laws arose from the possibility to state an equivalent of the Lindel\"of hypothesis but also to show that if one samples along $\phi$ on average the values of the Riemann zeta-function are small. 
To further quantify this behaviour we prove a central limit theorem showing that the distribution of the values (real part, imaginary part, and absolute value) behave in a nice way.

Central limit theorems for ergodic dynamical systems are a very classical object of study. Many of them make use of the transfer operator technique which we will also apply in our proof, see \cite{nagaev_limit_1957},
\cite{rousseau_theoreme_1982}, and \cite{guivarch_thoeremes_1988} for some of the earliest works. 
Also very particular central limit theorems with respect to a transformation from $\mathbb{R}$ into itself, similar to the transformation in \eqref{eq: def T} have been proven by Ishitani and Ishitani, see \cite[Theorem 4]{ishitani_invariant_2007} and Ishitani, see \cite[Theorem 2]{ishitani_transformations_2013}.
However, one of their requirements is that the observable is of bounded variation and thus has to be essentially bounded which implies that this theorem can not be applied on the Riemann zeta-function.

In the next section we will present our results first in a rather general setting as Theorem \ref{thm: main thm} and deduce the central limit theorem for the Riemann zeta-function as Corollary \ref{cor: CLT for zeta}.

\subsection{Statement of main results}
We first recall the definition of $\phi$ from \eqref{eq: def T}.  
Further, let $\lambda$ denote the Lebesgue measure on $\mathbb{R}$ and let the measure $\mu$ on $\mathbb{R}$ be defined by
\begin{align}
 \mathrm{d}\mu\left(x\right)\coloneqq \frac{1}{\pi\cdot\left(x^2+1\right)}\mathrm{d}\lambda(x).\label{eq: def measure}
\end{align}
For the following we will denote by $\mathbb{E}$ and $\mathbb{V}$ the standard definition of the expectation
and the variance with respect to $\mu$, i.e.\ we have for a function $\chi\colon \mathbb{R}\to\mathbb{R}$ 
that 
$\mathbb{E}\left(\chi\right)=\int \chi\mathrm{d}\mu$
and $\mathbb{V}\left(\chi\right)=\int \left(\chi-\mathbb{E}\left(\chi\right)\right)^2\mathrm{d}\mu$.
Furthermore, we denote by $h'\left(x-\right)$ and $h'\left(x+\right)$ the left and right derivative of a function $h$. If both derivatives exist and equal each other, we write $h'\left(x\right)$.

\begin{thm}\label{thm: main thm}
 Let $h\colon \mathbb{R}\to\mathbb{R}$ be such that the left and right derivatives exist and there exists $w\in(0,1/3)$ fulfilling
 \begin{align}
  h\left(x\right)&\ll \left|x\right|^{w}\qquad\text{ and }\qquad
  \max\left\{\left|h'\left(x-\right)\right|, \left|h'\left(x+\right)\right|\right\}\ll \left|x\right|^{w},\label{eq: boundedness condition}
 \end{align}
 as $x\to\pm\infty$. We set
\begin{align}
  S_n(\omega)\coloneqq \sum_{j=0}^{n-1}\left(h\circ \phi^j\right)(\omega).\label{eq: def Sn gen}
\end{align}
 
 Then we have that $\sigma^2\coloneqq \lim_{n\to\infty}\mathbb{V}\left(S_n\right)/n \in [0,\infty)$ and
 \begin{align*}
  \frac{S_n- \mathbb{E}\left(S_n\right)}{\sqrt{n}}\overset{D}{\longrightarrow}\mathcal{N}\left(0, \sigma^2\right),
 \end{align*}
 as $n\to\infty$ with respect to $\mu$, i.e.\ if $\sigma^2>0$ we have for all $a<b$ that
 \begin{align*}
  \lim_{n\to\infty}\mu\left(\omega\in\mathbb{R}\colon \frac{S_n(\omega)- \mathbb{E}\left(S_n\right)}{\sqrt{n}}\in (a,b)\right)=\frac{1}{\sqrt{2\pi}}\cdot \int_a^b\exp\left(-\frac{x^2}{2\sigma^2}\right)\mathrm{d}x.
 \end{align*}
 and if $\sigma^2=0$ we have the degenerate case that for all $\epsilon>0$
 \begin{align*}
  \lim_{n\to\infty}\mu\left(\omega\in\mathbb{R}\colon \left|\frac{S_n(\omega)- \mathbb{E}\left(S_n\right)}{\sqrt{n}}\right|>\epsilon\right)=0.
 \end{align*}
\end{thm}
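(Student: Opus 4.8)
The plan is to transport the problem to the doubling map and then run the transfer-operator machinery. The key observation is that $\phi$ is, up to a measure-theoretic isomorphism, nothing but the doubling map: with $\psi(\theta):=\cot(\pi\theta)$ one has the trigonometric identity $\phi\bigl(\cot(\pi\theta)\bigr)=\tfrac12\bigl(\cot(\pi\theta)-\tan(\pi\theta)\bigr)=\cot(2\pi\theta)$, so that $\phi\circ\psi=\psi\circ T$ off a countable set, where $T(\theta):=2\theta\bmod 1$ on $[0,1)$; moreover the substitution $x=\cot(\pi\theta)$ shows that the push-forward under $\psi$ of Lebesgue measure on $[0,1)$ is exactly $\mu$. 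Hence $\psi$ conjugates $\bigl([0,1),\lambda,T\bigr)$ to $(\mathbb{R},\mu,\phi)$, and it suffices to prove the central limit theorem for the Birkhoff sums of $f:=h\circ\psi$ over $\bigl([0,1),\lambda,T\bigr)$. Since $\cot(\pi\theta)\sim 1/(\pi\theta)$ and $\tfrac{d}{d\theta}\cot(\pi\theta)=-\pi\bigl(1+\cot^{2}(\pi\theta)\bigr)$ near $\theta=0$, the hypothesis \eqref{eq: boundedness condition} becomes: $f$ is continuous on $[0,1]$, has left and right derivatives off $0$, and carries a single power-type singularity at $0\equiv 1$, with $|f(\theta)|\ll d(\theta)^{-w}$ and $\max\{|f'(\theta-)|,|f'(\theta+)|\}\ll d(\theta)^{-w-2}$, where $d(\theta):=\min\{\theta,1-\theta\}$. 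In particular $f\in L^{p}(\lambda)$ for every $1\le p<1/w$, which by $w<1/3$ includes $p=2$ and some $p>2$.

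Next I would recall the spectral picture for the transfer operator $\mathcal{L}$ of $T$ with respect to $\lambda$, namely $\mathcal{L}g(\theta)=\tfrac12 g(\theta/2)+\tfrac12 g((\theta+1)/2)$, hence $\mathcal{L}^{n}g(\theta)=2^{-n}\sum_{k=0}^{2^{n}-1}g\bigl((\theta+k)/2^{n}\bigr)$: on $BV([0,1))$ it satisfies a Lasota--Yorke inequality and is quasi-compact, with $1$ a simple eigenvalue (eigenfunction $\mathbf{1}$) and the rest of the spectrum in a disc of radius $r<1$, while on $L^{2}(\lambda)$ it is a contraction (being the adjoint of the Koopman isometry). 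To cope with the unboundedness of $f$ I would first truncate near the singularity: for a level $N\ge 2$ let $f_{N}$ equal $f$ on $\{d(\theta)\ge 1/N\}$ and be constant on each component of $\{d(\theta)<1/N\}$; then $\|f_{N}\|_{BV}\ll N^{1+w}$ (from the derivative bound) and $\|f-f_{N}\|_{L^{p}}\ll N^{\,w-1/p}$ for $p<1/w$ (from the growth bound).

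The technical heart of the argument --- the step I expect to be the main obstacle --- is to promote this to exponential decay of correlations for $f_{0}:=f-\mathbb{E}(f)$ in spite of the singularity, i.e.\ $\|\mathcal{L}^{n}f_{0}\|_{L^{2}}\ll\vartheta^{n}$ for some $\vartheta\in(0,1)$. The idea is to let the truncation level depend on $n$: choosing $N=2^{\beta n}$ with $\beta>0$ small and writing $f_{0}=\bigl(f_{N}-\mathbb{E}f_{N}\bigr)+\bigl((f-f_{N})-\mathbb{E}(f-f_{N})\bigr)$, the $BV$ spectral gap controls the first, bounded, summand, $\|\mathcal{L}^{n}(f_{N}-\mathbb{E}f_{N})\|_{L^{2}}\le\|\mathcal{L}^{n}(f_{N}-\mathbb{E}f_{N})\|_{BV}\ll r^{n}\|f_{N}\|_{BV}\ll\bigl(r\cdot 2^{\beta(1+w)}\bigr)^{n}$, which decays geometrically once $\beta<\log_{2}(1/r)/(1+w)$, while the $L^{2}$-contraction controls the tail, $\|\mathcal{L}^{n}\bigl((f-f_{N})-\mathbb{E}(f-f_{N})\bigr)\|_{L^{2}}\le 2\|f-f_{N}\|_{L^{2}}\ll 2^{\beta n(w-1/2)}$, which decays because $w<1/2$. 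Adding the two estimates gives the claim, whence $\sum_{n\ge 1}\|\mathcal{L}^{n}f_{0}\|_{L^{2}}<\infty$, and one reads off that $\sigma^{2}:=\mathbb{E}(f_{0}^{2})+2\sum_{k\ge 1}\mathbb{E}\bigl(f_{0}\cdot(f_{0}\circ T^{k})\bigr)=\lim_{n}\mathbb{V}(S_{n})/n$ exists in $[0,\infty)$. (The precise threshold $w<1/3$ in the statement presumably reflects a less wasteful version of this bookkeeping, or the use of $\mathbb{E}|h|^{3}<\infty$ in the perturbative step; the scheme above already works for $w<1/2$.)

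Finally I would conclude by the transfer-operator (Nagaev--Guivarc'h) method, or --- cleaner when the observable is unbounded --- by Gordin's martingale approximation. Setting $\widehat g:=\sum_{k\ge 1}\mathcal{L}^{k}f_{0}\in L^{2}(\lambda)$ and $m:=f_{0}+\widehat g-\widehat g\circ T$, the identity $\mathcal{L}\bigl((u\circ T)\cdot v\bigr)=u\cdot\mathcal{L}v$ together with $\mathcal{L}\widehat g=\widehat g-\mathcal{L}f_{0}$ gives $\mathcal{L}m=0$, so that $(m\circ T^{j})_{j\ge 0}$ is a stationary ergodic reverse martingale difference sequence, while $f_{0}=m+\widehat g\circ T-\widehat g$ telescopes to $S_{n}-\mathbb{E}(S_{n})=\sum_{j=0}^{n-1}m\circ T^{j}+\bigl(\widehat g\circ T^{n}-\widehat g\bigr)$, the remainder being bounded in $L^{2}$ and hence $o(\sqrt n)$ in probability. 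The martingale central limit theorem then gives $n^{-1/2}\sum_{j=0}^{n-1}m\circ T^{j}\overset{D}{\longrightarrow}\mathcal{N}\bigl(0,\mathbb{E}(m^{2})\bigr)$, and comparing variances (since the remainder is $O(1)$ in $L^{2}$ and $\|\sum_{j<n}m\circ T^{j}\|_{L^{2}}^{2}=n\,\mathbb{E}(m^{2})$ by orthogonality) yields $\mathbb{E}(m^{2})=\sigma^{2}$; the degenerate case $\sigma^{2}=0$ is exactly the one in which $f_{0}$ is an $L^{2}$-coboundary, so that $S_{n}-\mathbb{E}(S_{n})=\widehat g\circ T^{n}-\widehat g$ stays bounded in $L^{2}$ and $n^{-1/2}(S_{n}-\mathbb{E}(S_{n}))\to 0$ in probability. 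Transporting everything back through $\psi$ yields the theorem for $\phi$ and $\mu$. The Nagaev--Guivarc'h alternative would instead study the perturbed family $\mathcal{L}_{t}g=\mathcal{L}(e^{itf}g)$ and extract a leading eigenvalue $\varrho(t)=1-\tfrac12\sigma^{2}t^{2}+o(t^{2})$; there the additional difficulty is that $e^{itf}\notin BV$ near the singularity, which once more forces a truncation $e^{itf}=e^{itf_{N}}+(e^{itf}-e^{itf_{N}})$ with an $|t|$-dependent level inside the Keller--Liverani perturbation framework.
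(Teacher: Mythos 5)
Your proposal is correct in its essentials, but it takes a genuinely different route from the paper after the common first step. The reduction is identical: both you and the paper conjugate $\phi$ to the doubling map via $\theta\mapsto\cot(\pi\theta)$, so that everything happens for the observable $f=h\circ\cot(\pi\,\cdot)$ with a power singularity $d(\theta)^{-w}$ at the endpoints. From there the paper proceeds functional-analytically: it builds a new Banach space of "weighted oscillation" type, with norm $\|\cdot\|_2+|\cdot|_{\alpha,\beta}$ where the seminorm measures oscillation of $R_\alpha f(x)=x^\alpha(1-x)^\alpha f(x)$, proves quasi-compactness of the transfer operator on it via the Hennion--Herv\'e criterion, checks that $f$ lies in this space (this is exactly where the threshold $w<1/3$ is consumed), and then invokes Melbourne--Nicol's CLT for quasi-compact transfer operators as a black box. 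You instead keep the classical BV space and the $L^2$-contraction property of $\mathcal{L}$, truncate $f$ at an $n$-dependent level $N=2^{\beta n}$, and interpolate the BV spectral gap against the $L^2$ tail bound to get geometric decay of $\|\mathcal{L}^n f_0\|_{L^2}$, concluding by Gordin's martingale approximation and the ergodic (reverse) martingale CLT; the variance identification and the degenerate case come out of the same decomposition. Your route is more elementary and self-contained (no new Banach space, no quasi-compactness argument for a weighted norm), and, as you note, your bookkeeping only needs $w<1/2$, so it would in fact relax the paper's hypothesis and correspondingly widen the admissible range of $s$ in the zeta corollary; what the paper's construction buys in exchange is a fixed quasi-compact space containing $\widetilde h$, which plugs directly into the Melbourne--Nicol framework and yields the extras advertised in its remarks (exponential decay of correlations in that norm, weak invariance principle, multidimensional versions) without further work. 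The only caveats in your write-up are minor and shared with the paper: the derivative hypothesis as $x\to\pm\infty$ must be read, together with continuity, as giving a global bound (needed for your BV estimate $\|f_N\|_{BV}\ll N^{1+w}$ just as for the paper's Lemma on $\|\widetilde h\|_{\alpha,\beta}$), and you should cite a specific ergodic reverse-martingale CLT (Billingsley--Ibragimov) and note that one-sided differentiability with locally bounded one-sided derivatives suffices for the variation bound; none of these affects the validity of the argument.
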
 

\begin{Rem}\label{rem: weak inv princ multi version}
 The proof of this theorem makes use of \cite[Theorem 3.1]{melbourne_statistical_2004} and as there are no additionally imposed conditions for the stronger statement of a weak invariance principle, see \cite[Theorem 3.4]{melbourne_statistical_2004}, the above result can also be generalized. Similarly, it can be extended to a multidimensional version.
\end{Rem}

\begin{Cor}\label{cor: CLT for zeta}
 Let $\zeta$ be the Riemann zeta-function.
 If $s\in(1/3,1)$ and $S_n$ is defined as one of the following Birkhoff sums 
 \begin{align}
  S_n(\omega)&\coloneqq \sum_{j=0}^{n-1}\Re\left(\zeta\left(s+i\cdot \phi^j(\omega)\right)\right),\text{ or }\label{eq: sum Re}\\
  S_n(\omega)&\coloneqq \sum_{j=0}^{n-1}\Im\left(\zeta\left(s+i\cdot \phi^j(\omega)\right)\right),\text{ or }\label{eq: sum Im}\\
  S_n(\omega)&\coloneqq \sum_{j=0}^{n-1}\left|\zeta\left(s+i\cdot \phi^j(\omega)\right)\right|,\label{eq: def Sn}
 \end{align}
 then $\sigma^2\coloneqq \lim_{n\to\infty}\mathbb{V}\left(S_n\right)/n\in [0,\infty)$ and it holds that
 \begin{align*}
  \frac{S_n- \mathbb{E}\left(S_n\right)}{\sqrt{n}}\overset{D}{\longrightarrow}\mathcal{N}\left(0, \sigma^2\right),
 \end{align*}
 as $n\to\infty$ with respect to $\mu$.
 
 If we set $s=1/2$, then we additionally have that $\sigma^2>0$ and thus the central limit theorem is non-degenerate.
\end{Cor}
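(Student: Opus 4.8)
The first part of the corollary follows once we check that each of the three observables $h$ (namely $\Re\zeta(s+i\,\cdot\,)$, $\Im\zeta(s+i\,\cdot\,)$ and $|\zeta(s+i\,\cdot\,)|$) satisfies the hypotheses of Theorem \ref{thm: main thm}, so the work is (a) this verification and (b) the non-degeneracy claim at $s=1/2$.

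For (a): since $s\in(1/3,1)$ the function $\zeta$ is holomorphic on the line $\Re z=s$ (its only pole, at $z=1$, lies strictly to the right), hence $t\mapsto\Re\zeta(s+it)$ and $t\mapsto\Im\zeta(s+it)$ are real-analytic and in particular have two-sided derivatives everywhere. The map $t\mapsto|\zeta(s+it)|$ is real-analytic away from the zeros of $t\mapsto\zeta(s+it)$; at such a zero $t_0$ of order $m$ one has $|\zeta(s+it)|=|t-t_0|^m|\psi(t)|$ with $\psi$ real-analytic and $\psi(t_0)\neq0$, so the left and right derivatives exist there too (they equal $\mp|\psi(t_0)|$ if $m=1$ and $0$ if $m\ge2$), and since the zeros do not accumulate on a bounded interval $|\zeta(s+i\,\cdot\,)|$ has one-sided derivatives at every point. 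For the growth, the convexity (Phragm\'en--Lindel\"of) bound gives $\zeta(\sigma+it)\ll_{\varepsilon}(1+|t|)^{(1-\sigma)/2+\varepsilon}$ for $0\le\sigma\le1$, and Cauchy's integral formula on a circle of small fixed radius about $s+it$ (which can be kept inside $\{\Re z>1/3\}$ because $s>1/3$) yields the same bound for $\zeta'$. As $s>1/3$ we have $(1-s)/2<1/3$, so we may fix $w\in\big((1-s)/2,\,1/3\big)$; then $h(x)\ll|x|^{w}$, and since each candidate for $\max\{|h'(x-)|,|h'(x+)|\}$ is dominated by $|\zeta'(s+ix)|$, also $\max\{|h'(x-)|,|h'(x+)|\}\ll|x|^{w}$ as $x\to\pm\infty$. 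Thus Theorem \ref{thm: main thm} applies and gives the stated convergence with $\sigma^2\in[0,\infty)$.

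For (b): from the proof of Theorem \ref{thm: main thm} (the spectral gap makes $\mathbb V(S_n)=\sigma^2 n+O(1)$), $\sigma^2=0$ holds if and only if $h-\mathbb E(h)$ is an $L^2(\mu)$-coboundary $h-\mathbb E(h)=g-g\circ\phi$, $g\in L^2(\mu)$. Now I would use the classical isomorphism underlying the ergodicity of $\phi$ recalled in the introduction: $x=\cot(\pi y)$ conjugates $(\phi,\mu)$ to the doubling map $D(y)=2y\bmod1$ on $[0,1)$ with Lebesgue measure. Hence $H_0\coloneqq\big(h-\mathbb E(h)\big)\circ\cot(\pi\,\cdot\,)$ would be an $L^2$-coboundary for $D$, and a short Fourier computation (using $\widehat{F\circ D}(k)=\widehat F(k/2)\,\mathbbm{1}[2\mid k]$ and telescoping) turns this into the variance identity $\sigma^2=2\sum_{m\ \mathrm{odd}}\big|\sum_{a\ge0}\widehat{H_0}(2^a m)\big|^2$; so it suffices to exhibit one odd $m$ with $\sum_{a\ge0}\widehat{H_0}(2^a m)\neq0$, and I would take $m=1$. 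Writing, for $k\ge1$,
\[
 \widehat{H_0}(k)=\frac1\pi\int_{\mathbb R}\frac{h(x)\,(x-i)^{k-1}}{(x+i)^{k+1}}\,\mathrm{d}x ,\qquad \Re\zeta(\tfrac12+ix)=\tfrac12\big(\zeta(\tfrac12+ix)+\zeta(\tfrac12-ix)\big),
\]
and shifting each half of the integral into the half-plane where the integrand decays (sub-convexity of $\zeta$ justifying the vanishing of the arcs, since $\Re(\tfrac12\pm ix)\to+\infty$ there), in one half the only singularity met is the simple pole of $\zeta$ at $z=1$, at $x=i/2$, contributing the explicit exponentially small amount $\tfrac43(-1)^{k-1}3^{-k}$; in the other half one meets both the simple pole at $x=-i/2$ and the order-$(k+1)$ pole of the kernel at $x=-i$, but subtracting the principal part of $\zeta$ at $z=1$ makes the corresponding integrand entire and its contour integral equals $\tfrac12\widehat\Psi(k)$, where $\Psi(y)=\zeta(\tfrac12+i\cot\pi y)-\big(i\cot\pi y-\tfrac12\big)^{-1}$ extends holomorphically to the unit disc with only a mild singularity where $\cot\pi y=\infty$. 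Hence $\widehat{H_0}(k)=\tfrac43(-1)^{k-1}3^{-k}+\tfrac12\widehat\Psi(k)$ and
\[
 \sum_{a\ge0}\widehat{H_0}(2^a)=\tfrac43\Big(\tfrac13-\sum_{a\ge1}3^{-2^a}\Big)+\tfrac12\sum_{a\ge0}\widehat\Psi(2^a),
\]
whose first summand is a positive explicit number.

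The main obstacle is the last step: one must show that $\tfrac12\sum_{a\ge0}\widehat\Psi(2^a)$ does not exactly cancel that number. The coefficients $\widehat\Psi(2^a)$ are governed by the behaviour of $\zeta$ high on the critical line via $\cot\pi y\sim(\pi y)^{-1}$; a stationary-phase estimate suggests $\widehat\Psi(k)\ll k^{-3/4+\varepsilon}$, so after computing $\widehat\Psi(1),\widehat\Psi(2),\dots$ in terms of $\zeta$ and its first few derivatives at $3/2$ one should be able to bound the tail and conclude $\sum_{a\ge0}\widehat{H_0}(2^a)\neq0$; alternatively one argues more softly that the coincidence fails for \emph{some} odd $m$. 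Everything else — the hypothesis check and the reduction to the doubling map — is routine, and the arithmetic of $\zeta$ enters only through its pole at $1$ (the source of the clean nonvanishing contribution) and through sub-convexity (which legitimises the contour shifts). The cases $\Im\zeta$ and $|\zeta|$ go the same way, except that $|\zeta(\tfrac12+i\,\cdot\,)|$ is not meromorphic, so there one instead exploits the symmetry $y\mapsto1-y$ and positivity of $|\zeta|$; a periodic-orbit comparison, which would be the quickest route in the abstract, is unavailable here because $H_0$ fails to be continuous at the point corresponding to $x=\infty$, so the transfer function $g$ need not be continuous on periodic orbits.
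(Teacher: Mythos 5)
Your verification of the hypotheses of Theorem \ref{thm: main thm} (convexity bound for $\zeta$ and $\zeta'$, Cauchy--Riemann to control $\partial_y\Re\zeta$ and $\partial_y\Im\zeta$, one-sided derivatives of $|\zeta(s+i\cdot)|$ at zeros) is correct and is essentially the argument in the paper, which gets the growth bound from Lemma \ref{lem: bound zeta} and chooses $w=(1-s)/2+\delta_s<1/3$. The problem is part (b). Your reduction of $\sigma^2=0$ to the vanishing of all dyadic Fourier sums $\sum_{a\ge0}\widehat{H}_0(2^a m)$, $m$ odd, for the doubling map is a legitimate (and genuinely different) strategy, and your residue computation extracting the explicit term $\tfrac43(-1)^{k-1}3^{-k}$ from the pole of $\zeta$ at $1$ looks right in outline. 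But the proof stops exactly where the statement has to be established: you never show that $\tfrac12\sum_{a\ge0}\widehat{\Psi}(2^a)$ fails to cancel the explicit term. The decay $\widehat{\Psi}(k)\ll k^{-3/4+\varepsilon}$ is only ``suggested'' by stationary phase (it hinges on the behaviour of $\zeta$ high on the critical line, encoded in the boundary singularity at $u=1$), no rigorous tail bound or numerical evaluation with error control is given, and the fallback ``one argues more softly that the coincidence fails for some odd $m$'' is not an argument. Moreover the cases $\Im\zeta$ and $|\zeta|$ are not covered: $|\zeta(\tfrac12+i\cdot)|$ is not the boundary value of a meromorphic function, so the contour-shift scheme does not apply to it at all, and ``symmetry and positivity'' is not a substitute. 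So the non-degeneracy claim at $s=1/2$, for all three observables, remains unproven in your write-up.

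Your closing remark that a periodic-orbit comparison ``is unavailable because $H_0$ fails to be continuous at the point corresponding to $x=\infty$'' is also mistaken, and it is precisely the route the paper takes. By \cite[Proposition 3.2]{melbourne_statistical_2004}, if $\sigma^2=0$ then the transfer function can be taken in any quasi-compact Banach space containing $\widetilde{h}$; the paper passes to the subspace $\mathsf{V}''_{\alpha,\beta}$ of functions that are continuous on the \emph{open} interval $(0,1)$ (quasi-compactness and $\|\widetilde h\|''_{\alpha,\beta}<\infty$ are checked as before), so a coboundary identity holding a.e.\ upgrades to a pointwise identity on $(0,1)$. The discontinuity sits only at the endpoints $0,1$ (i.e.\ $x=\pm\infty$), while the periodic orbits used --- $\{1/3,2/3\}$ for $\Re\zeta$ and $|\zeta|$, and $\{1/7,2/7,4/7\}$ for $\Im\zeta$ (the period-two sum vanishes identically for the odd function $\Im\zeta$) --- stay inside $(0,1)$. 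A coboundary would force the Birkhoff sums over these orbits to vanish, and a short numerical evaluation of $\zeta(1/2+i\cot(\pi k/q))$ shows they do not. This closes the non-degeneracy step in a few lines, whereas your Fourier route, while interesting and potentially capable of giving quantitative lower bounds on $\sigma^2$, would require substantial additional analytic and numerical work to complete.
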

\begin{Rem}
  In \cite[Theorem 1]{steuding_sampling_2012} some values for $\int\zeta\left(s+i\cdot x\right)\mathrm{d}\mu\left(x\right)$ are given explicitly from which one can immediately calculate $\mathbb{E}\left(S_n\right)$ for the first two definitions of $S_n$.

 However, for $\sigma^2$ it is hard to get a precise value or even an estimate.
 Considering \eqref{eq: sum Re} we might set $h\colon \mathbb{R}\to\mathbb{R}$ with $h\left(x\right)= \Re\left(\zeta\left(s+i\cdot x\right)\right)$ for fixed $s\in (1/3,1)$ and it follows easily that 
 \begin{align}
  \sigma^2=\mathbb{V}\left(h\right)
  +2\sum_{k=1}^{\infty}\mathrm{Cov}\left(h, h\circ \phi^{k}\right),\label{eq: variance in v and cov}
 \end{align}
 where $\mathrm{Cov}$ denotes the covariance with respect to $\mu$, i.e.\ we have for two functions $f,g\colon \mathbb{R}\to\mathbb{R}$ being square integrable with respect to $\mu$ that 
 $\mathrm{Cov}\left(f,g\right)=\int \left(f-\int f\mathrm{d}\mu\right)\cdot \left(g-\int g\mathrm{d}\mu\right)\mathrm{d}\mu$. 
 The first summand $\mathbb{V}\left(h\right)$ in \eqref{eq: variance in v and cov} is already difficult to calculate as $h$ is not an analytic function anymore (similarly if we are considering the imaginary or the absolute part instead of the real part). 
 The second sum term in \eqref{eq: variance in v and cov} is also difficult to estimate, $h$ and $h\circ \phi^{k}$ are clearly not independent, so the covariances do not boil down to zero. 
 However, following the proof of this theorem and applying \cite[Remark 3.7]{melbourne_statistical_2004} it will become clear that there is an exponential decay of correlations, i.e.\ there exist $R>0$ and $\theta\in (0,1)$ such that $\left|\mathrm{Cov}\left(h, h\circ \phi^{k}\right)\right|\leq R\cdot \theta^k\cdot \left\|\widetilde{h}\right\|_2\cdot\left\|\widetilde{h}\right\|$, where $\left\|\cdot\right\|$ denotes a norm which we will define in Section \ref{subsec: Banach space} and $\widetilde{h}$ a function related to $h$ fulfilling $\left\|\widetilde{h}\right\|<\infty$ being defined in Section \ref{subsec: main part of proof}.
 This of course implies that $\sigma^2<\infty$. 
 But finding the optimal constants $R$ and $\theta$ and estimating $\left\|\widetilde{h}\right\|$ is not completely immediate. 
 
 To insure that $\sigma^2\neq 0$, we have to make sure that $h$ is not a coboundary for $\phi$, i.e.\ that there does not exist any function $g$ such that $h=g-g\circ \phi$ holds almost surely. 
 There is a very strong numerical evidence that $\sigma^2>0$ does also hold in the case \eqref{eq: sum Im}. This will be discussed in Remark \ref{rem: numerics}.
\end{Rem} 

\begin{Rem}
 As discussed in Remark \ref{rem: weak inv princ multi version} it would also be possible to generalize this theorem as a multidimensional version having the real and imaginary part as its entries. However, in this case it is even more challenging to ensure that the covariance matrix is positive definite and thus the central limit theorem is non-degenerate. 
\end{Rem}

\begin{Rem}
 The bound $s\in (1/3, 1)$ looks strange at the beginning. However, it does not seem to be easy to soften that condition as it is directly associated to $w\in (1/3, 1)$ in \eqref{eq: boundedness condition}. 
 A further discussion about this issue will be given in Remark \ref{rem: bound of 1/3}.
\end{Rem}

\section{Proof of Theorems}
\subsection{Main part of the proof of Theorem \ref{thm: main thm}}\label{subsec: main part of proof}
In this section we give a skeleton of the proof of Theorem \ref{thm: main thm}. 
It will turn out that the main technical part to be shown is given as Proposition \ref{prop: main prop} below
and the proof of it will be given in Sections \ref{subsec: Banach space}, \ref{subsec: quasi-compactness}, and \ref{subsec: boundedness}.
In Section \ref{subsec: proof of Cor} we will give the proof of Corollary \ref{cor: CLT for zeta}.

\begin{proof}[Proof of Theorem \ref{thm: main thm}]
We define $I\coloneqq [0,1]$ and $\psi\colon I\to I$ by $\psi\left(x\right)\coloneqq 2x \mod 1$
and the function $\xi\colon I\to\mathbb{R}$ by $\xi\left(x\right)\coloneqq \cot\left(\pi x\right)$.
Note that $\xi$ is almost surely bijective. 

By \cite[Proposition 1.1]{prykarpatski_ergodic_2015} the measure $\mu$ is ergodic with respect to $\phi$.
If we denote by $\mathcal{B}_{\mathbb{R}}$ and $\mathcal{B}_{I}$ the Borel sets
on $\mathbb{R}$ and $I$ respectively and by $\lambda_I$ the Lebesgue measure restricted to $I$, then we have by
\cite[Proposition 1.2]{prykarpatski_ergodic_2015} that the dynamical systems 
$\left(\mathbb{R}, \mathcal{B}_{\mathbb{R}}, \mu, \phi\right)$ 
and 
$\left(I, \mathcal{B}_{I}, \lambda_I, \psi\right)$
are isomorphic via $\xi$, i.e.\
\begin{align*}
 \left(\phi\circ\xi\right)\left(x\right)=\left(\xi\circ\psi\right)\left(x\right),
\end{align*}
for all $x\in I$
and additionally $\xi$ and $\xi^{-1}$ are measure preserving, i.e.\
for all $B\in\mathcal{B}_{\mathbb{R}}$ it holds that $\mu\left(B\right)=\lambda_I\left(\xi^{-1} B\right)$
and for all $B\in\mathcal{B}_{I}$ it holds that 
$\lambda_I\left(B\right)=\mu\left(\xi B\right)$.

This gives us an easier system to study. 
Instead of studying the Birkhoff sum 
\begin{align}
 \sum_{n=0}^{N-1}\left(h\circ \phi^n\right)\left(x\right)\label{eq: second sum}
\end{align}
with $x\in \mathbb{R}$
we can study the sum 
\begin{align*}
 \sum_{n=0}^{N-1}\left(h\circ \xi\circ\psi^n\right)\left(y\right),
\end{align*}
for $y\in I$. 
Since the transformations $\phi$ and $\psi$ are isomorphic we can conclude that 
\begin{align*}
 \mu\left(\sum_{n=0}^{N-1}\left(h\circ \phi^n\right)\left(x\right)\in A\right)
=\lambda\left(\sum_{n=0}^{N-1}\left(h\circ \xi\circ\psi^n\right)\left(y\right)\in A\right),
\end{align*}
for all sets $A\in\mathcal{B}_{\mathbb{R}}$.

Formally, we define $\widetilde{h}\colon I\to\mathbb{R}$ by
$\widetilde{h}\left(x\right)\coloneqq \left(h\circ \xi\right)\left(x\right)$ 
and consider then the Birkhoff sum $\sum_{n=0}^{n-1} \widetilde{h}\circ \psi^n$. For this sum we will prove a central limit theorem using the transfer operator method. 

We first give the basic definition of the transfer operator. 
\begin{Def}
 If $\left(X,\mathcal{A},\nu, T\right)$ is a mixing, probability preserving dynamical system,
then we denote by $\widehat{T}$ the \emph{transfer operator} of $T$, i.e.\ the (up to almost sure equivalence)
uniquely defined operator such that for all $f\in\mathcal{L}^1_{\nu}\left(X\right)$ and $g\in\mathcal{L}^{\infty}_{\nu}\left(X\right)$ we have
\begin{align}
 \int f\cdot g\circ T\mathrm{d}\nu = \int \widehat{T} f\cdot g\mathrm{d}\nu.\label{eq: def transfer op}
\end{align}
\end{Def}
Furthermore, we will need the notion of quasi-compactness given as follows:
\begin{Def}\label{def: quasi-compact}
 $U\colon\mathcal{M}\to\mathcal{M}$ is a \emph{quasi-compact} operator if there exists a direct sum decomposition $\mathcal{M}=G\oplus H$ 
 and $0<\tau<\rho\left(U\right)$ with $\rho$ the spectral radius where
\begin{itemize}
\item $G$, $H$ are closed and $U$-invariant, i.e.\ $U\left(G\right) \subset G$, $U\left(H\right) \subset H$,
\item $\dim\left(G\right) <\infty$  and all eigenvalues of $U\lvert_{G}:\mathcal{F}\to \mathcal{F}$ have modulus larger than $\tau$, and
\item $\rho\left(U\lvert_{H}\right)<\tau$. 
\end{itemize}
\end{Def}
With this definition we are able to state the main proposition we need for our proof:
\begin{Prop}[{\cite[Theorem 3.1]{melbourne_statistical_2004}}]\label{prop: CLT dynsys}
 Let $\left(\Omega,\mathcal{B},\nu, T\right)$ be a probability measure space and let $T\colon \Omega\to\Omega$
 be an ergodic, measure preserving transformation. 
 Further let $\mathcal{M}\subset\mathcal{L}^2_{\nu}(\Omega)$ be a Banach space such that $\widehat{T}$ is quasi-compact on $\mathcal{M}$ and $\chi\in \mathcal{M}$.
 Then a central limit theorem for the sequence $\left(\chi\circ T^{n-1}\right)_{n\in\mathbb{N}}$ holds, i.e.\ 
if $S_n\coloneqq \sum_{k=0}^{n-1}\chi\circ T^k$, then we have for all $a<b$ that 
 \begin{align*}
  \lim_{n\to\infty}\nu\left(\omega\colon \frac{S_n(\omega)- \mathbb{E}\left(S_n\right)}{\sqrt{\mathbb{V}\left(S_n\right)}}\in (a,b)\right)=\frac{1}{\sqrt{2\pi}}\cdot \int_a^b\exp\left(-\frac{x^2}{2}\right)\;\mathrm{d}x.
 \end{align*}
\end{Prop}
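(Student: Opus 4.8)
The plan is to prove Proposition~\ref{prop: CLT dynsys} by the Nagaev--Guivarc'h perturbation method, which is the argument behind \cite[Theorem~3.1]{melbourne_statistical_2004}; I outline it here. For $t\in\mathbb{R}$ introduce the \emph{twisted transfer operator} $\widehat{T}_t\colon\mathcal{M}\to\mathcal{M}$, $\widehat{T}_t f\coloneqq\widehat{T}\left(e^{it\chi}\cdot f\right)$; the standing hypotheses on $\mathcal{M}$ ensure that multiplication by $e^{it\chi}$ preserves $\mathcal{M}$ boundedly, so that $\widehat{T}_t$ is a well-defined bounded operator with $\widehat{T}_0=\widehat{T}$. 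The first step is the elementary identity
\[
  \int e^{itS_n}\,\mathrm{d}\nu=\int\widehat{T}_t^{\,n}\mathbf{1}\,\mathrm{d}\nu ,
\]
which I would prove by induction on $n$ from the duality~\eqref{eq: def transfer op}, the cocycle relation $S_{n+1}=\chi+S_n\circ T$, and the invariance $\int\widehat{T}f\,\mathrm{d}\nu=\int f\,\mathrm{d}\nu$. This reduces the asymptotics of the characteristic function of $S_n$ to an analysis of the iterates of $\widehat{T}_t$ for $t$ close to $0$.

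Next I would carry out the spectral analysis. At $t=0$, quasi-compactness (Definition~\ref{def: quasi-compact}) of $\widehat{T}$ together with ergodicity of $T$ — and, as is standard, the mixing built into the definition of the transfer operator, which excludes other unimodular eigenvalues — forces the peripheral spectrum of $\widehat{T}$ on $\mathcal{M}$ to reduce to the simple, isolated eigenvalue $1$, with eigenfunction $\mathbf{1}$, spectral projection $P_0 f=\left(\int f\,\mathrm{d}\nu\right)\mathbf{1}$, and the rest of the spectrum inside a disc of radius $\theta<1$. Standard perturbation theory for the family $t\mapsto\widehat{T}_t$ then yields, for $|t|$ small, a simple eigenvalue $\lambda(t)$ with $\lambda(0)=1$, a projection $P_t\to P_0$ as $t\to0$, and a splitting $\widehat{T}_t^{\,n}=\lambda(t)^n P_t+Q_t^{\,n}$ with $\left\|Q_t^{\,n}\right\|\le C\theta_0^{\,n}$ uniformly in $t$ for some $\theta_0\in(\theta,1)$. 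One also needs that $\lambda(\cdot)$ admits a second-order Taylor expansion at $0$: if $\chi$ were bounded the family would be analytic and this would be automatic, but here $\chi\in\mathcal{M}\subset\mathcal{L}^2_\nu$ may be unbounded, and the required regularity has to be recovered by the approximation argument of \cite[\S3]{melbourne_statistical_2004}.

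Combining the two steps gives $\int e^{itS_n}\,\mathrm{d}\nu=\lambda(t)^n\int P_t\mathbf{1}\,\mathrm{d}\nu+O(\theta_0^{\,n})$ with $\int P_t\mathbf{1}\,\mathrm{d}\nu\to1$ as $t\to0$; comparing the logarithm of this with $n\log\lambda(t)$ and using $\mathbb{E}(S_n)=n\,\mathbb{E}(\chi)$ together with $\mathbb{V}(S_n)=n\sigma^2+o(n)$ — the latter being a consequence of the exponential decay of correlations supplied by quasi-compactness — identifies
\[
  \log\lambda(t)=i\,t\,\mathbb{E}(\chi)-\tfrac12\sigma^2 t^2+o(t^2),\qquad
  \sigma^2=\mathbb{V}(\chi)+2\sum_{k\ge1}\mathrm{Cov}\left(\chi,\chi\circ T^k\right)=\lim_{n\to\infty}\frac{\mathbb{V}(S_n)}{n}\in[0,\infty).
\]
Substituting $t/\sqrt n$ for $t$ and recentring, one gets $\int\exp\!\left(i\tfrac{t}{\sqrt n}\bigl(S_n-\mathbb{E}(S_n)\bigr)\right)\mathrm{d}\nu=e^{-it\sqrt n\,\mathbb{E}(\chi)}\,\lambda(t/\sqrt n)^n(1+o(1))\to e^{-\sigma^2 t^2/2}$, so by Lévy's continuity theorem $\bigl(S_n-\mathbb{E}(S_n)\bigr)/\sqrt n\overset{D}{\longrightarrow}\mathcal{N}\left(0,\sigma^2\right)$; when $\sigma^2>0$ one divides instead by $\sqrt{\mathbb{V}(S_n)}\sim\sigma\sqrt n$ and obtains the standard normal limit as stated, while if $\sigma^2=0$ (equivalently $\chi$ is a measurable coboundary) the same expansion gives $\mathbb{V}(S_n)/n\to0$, hence convergence to $0$ in probability.

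The genuine obstacle is the perturbation step under the weak assumption on $\chi$: establishing norm-continuity of $t\mapsto\widehat{T}_t$ on $\mathcal{M}$ and twice-differentiability of $\lambda$ at $0$ from $\chi\in\mathcal{M}\subset\mathcal{L}^2_\nu$ alone, and excluding peripheral spectrum other than $\{1\}$. This is exactly what is encapsulated in the hypotheses of \cite[Theorem~3.1]{melbourne_statistical_2004}, so for the present paper the real work is displaced to constructing a Banach space $\mathcal{M}$ on which the transfer operator $\widehat{\psi}$ of $\psi$ is quasi-compact and which contains $\widetilde{h}$ — exactly the content of Proposition~\ref{prop: main prop}, and the source of the restriction $w\in(0,1/3)$.
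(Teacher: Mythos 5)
First, note that the paper does not prove this proposition at all: it is quoted verbatim as \cite[Theorem 3.1]{melbourne_statistical_2004}, so there is no internal proof to match. Measured as a standalone proof of the quoted result, your Nagaev--Guivarc'h sketch has a genuine gap precisely at its crucial step. You assert that ``the standing hypotheses on $\mathcal{M}$ ensure that multiplication by $e^{it\chi}$ preserves $\mathcal{M}$ boundedly'' and that the second-order expansion of the leading eigenvalue $\lambda(t)$ can be ``recovered by the approximation argument of \cite[\S 3]{melbourne_statistical_2004}''. Neither claim is available: the hypotheses of the proposition are only that $\mathcal{M}\subset\mathcal{L}^2_{\nu}$ is a Banach space on which $\widehat{T}$ is quasi-compact and that $\chi\in\mathcal{M}$; nothing guarantees that $f\mapsto e^{it\chi}f$ maps $\mathcal{M}$ into itself, let alone boundedly or with any continuity in $t$. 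For unbounded $\chi$ --- which is exactly the situation the paper needs, since $\widetilde{h}=h\circ\xi$ blows up at $0$ and $1$, and on oscillation-type spaces such as $\mathsf{V}_{\alpha,\beta}$ --- the local oscillation of $e^{it\chi}$ is not controlled, so the twisted operator $\widehat{T}_t$ need not even be well defined on $\mathcal{M}$. Moreover the appeal to Melbourne--Nicol for this regularity is circular/misattributed: their proof of Theorem 3.1 does not use twisted transfer operators at all, so there is no such approximation argument there to borrow. You correctly identify this as ``the genuine obstacle'', but then defer it to a source that does not resolve it; as it stands the key perturbation step is unproved, and under the stated hypotheses it is not provable in general.

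For comparison, the actual proof of the cited theorem (and the reason the paper can use it off the shelf) is a martingale--coboundary argument in the spirit of Gordin: quasi-compactness plus ergodicity (and the mixing built into the definition via \eqref{eq: def transfer op}) give a spectral gap for $\widehat{T}$ on the mean-zero part of $\mathcal{M}$, so for centred $\chi$ the series $g=\sum_{n\ge 1}\widehat{T}^{\,n}\chi$ converges in $\mathcal{M}\subset\mathcal{L}^2_{\nu}$, one writes $\chi=m+g-g\circ T$ with $\widehat{T}m=0$, and the CLT follows from the central limit theorem for stationary ergodic (reverse) martingale differences; this route needs only $\chi\in\mathcal{M}\subset\mathcal{L}^2_{\nu}$, handles unbounded observables automatically, identifies $\sigma^2=\lim_n\mathbb{V}(S_n)/n$ (with the degenerate case $\sigma^2=0$ corresponding to an $\mathcal{L}^2$ coboundary), and yields the weak invariance principle essentially for free --- which is exactly what Remark \ref{rem: weak inv princ multi version} alludes to. If you want to keep the characteristic-function/perturbation route, you would have to add hypotheses (e.g.\ bounded $\chi$, or a space closed under multiplication by $e^{it\chi}$ with Keller--Liverani-type control), which would make the proposition too weak for the application to the Riemann zeta-function in this paper.
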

Hence, we are left to show the following proposition:
\begin{Prop}\label{prop: main prop}
 There exists a Banach space $\left(\mathcal{M},\left\|\cdot\right\|\right)$ of functions mapping $I$ to $\mathbb{R}$ fulfilling the following conditions: 
\begin{enumerate}[(a)]
 \item\label{en: L2} $\mathcal{M}\subset\mathcal{L}^2_{\mu}$,
 \item\label{en: bounded} $\left\|\widetilde{h}\right\|<\infty$,
 \item\label{en: spec gap} $\widehat{\psi}$ is quasi-compact on $\mathcal{M}$.
\end{enumerate}
\end{Prop}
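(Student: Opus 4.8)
The plan is to realise $\mathcal M$ as a weighted function space on $I=[0,1]$ that tolerates a prescribed, mild blow-up at the two poles $0$ and $1$ of $\xi(x)=\cot(\pi x)$, and then to obtain condition (c) by establishing a Lasota--Yorke (Ionescu-Tulcea--Marinescu) inequality for the transfer operator of the doubling map, which is the explicit averaging operator
\[
 \bigl(\widehat\psi f\bigr)(x)=\tfrac12\Bigl(f\bigl(\tfrac x2\bigr)+f\bigl(\tfrac{x+1}2\bigr)\Bigr),
\]
followed by an appeal to Hennion's theorem. First I would fix a weight $v\colon(0,1)\to(0,\infty)$ that is smooth, symmetric about $1/2$, and satisfies $v(x)\asymp x^{-w}$ as $x\to0+$ (so $v(x)\asymp(1-x)^{-w}$ as $x\to1-$); this is exactly the size that $h(x)\ll|x|^{w}$ and $\cot(\pi x)\asymp1/x$ permit for $\widetilde h=h\circ\xi$ near the endpoints. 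Put $A_0=[1/4,3/4]$ and, for $k\ge1$, $A_k^{-}=[2^{-k-1},2^{-k}]$, $A_k^{+}=[1-2^{-k},1-2^{-k-1}]$, the dyadic shells accumulating at $0$ and $1$. Writing $[g]_{\eta,A}:=\sup\{|g(x)-g(y)|/|x-y|^{\eta}:x,y\in A,\;x\neq y\}$ for the $\eta$-H\"older seminorm on $A$, and choosing an auxiliary exponent $\eta\in(0,1-w)$, I set
\[
 \|f\|:=\Bigl\|\tfrac{f}{v}\Bigr\|_{\infty}+[f]_{\eta,A_0}+\sup_{k\ge1}\;\max_{\sigma\in\{+,-\}}|A_k^{\sigma}|^{w+2\eta}\,[f]_{\eta,A_k^{\sigma}},
\]
and let $\mathcal M$ be the space of $f$ with $\|f\|<\infty$. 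The shell weights $|A_k^{\sigma}|^{w+2\eta}$ are forced by the size of $\widetilde h$ found in the next step.

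Condition (a) is then immediate: $f\in\mathcal M$ gives $|f(x)|\le\|f\|v(x)$, and $\int_{I}v^{2}\asymp\int_{0}x^{-2w}<\infty$ since $w<1/2$, so $\mathcal M\subset\mathcal L^{2}$. For (b) one verifies $\widetilde h\in\mathcal M$ termwise. On $A_0$ the function $\widetilde h=h\circ\xi$ inherits the one-sided differentiability of $h$ while $\xi$ is analytic, so $[\widetilde h]_{\eta,A_0}<\infty$. Near $0$, with $y=\xi(x)\asymp1/x$, one has $|\widetilde h(x)|\le|h(y)|\ll y^{w}\asymp x^{-w}\asymp v(x)$, which controls the first term; and on $A_k^{-}$ one has $|\xi'(x)|\asymp x^{-2}\asymp|A_k^{-}|^{-2}$ and $|h'(y\pm)|\ll y^{w}\asymp|A_k^{-}|^{-w}$, so that $\widetilde h$ oscillates there with amplitude $\ll|A_k^{-}|^{-w}$ at local frequency $\ll|A_k^{-}|^{-2}$. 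Since a band-limited oscillation of amplitude $A$ and frequency $\Omega$ has $\eta$-H\"older seminorm $\ll A\Omega^{\eta}$ --- and a convexity (Cauchy--Schwarz) argument exploiting $|h|\ll y^{w}$ and $|h'|\ll y^{w}$ simultaneously extends this to the general broadband case --- we obtain $[\widetilde h]_{\eta,A_k^{-}}\ll|A_k^{-}|^{-w}\bigl(|A_k^{-}|^{-2}\bigr)^{\eta}=|A_k^{-}|^{-(w+2\eta)}$, so the weighted shell terms stay bounded and $\|\widetilde h\|<\infty$.

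The core is (c). I would prove a Lasota--Yorke inequality $\|\widehat\psi^{N}f\|\le C\theta^{N}\|f\|+C_{N}\|f\|_{1}$ for some $\theta<1$ and all $N$, and then invoke Hennion's theorem, whose remaining hypotheses --- $\|\widehat\psi f\|_{1}\le\|f\|_{1}$, automatic, and relative compactness of the unit ball of $\mathcal M$ in $\mathcal L^{1}$ --- are verified by a diagonal argument: on each shell the unit ball is bounded and equi-$\eta$-H\"older, hence precompact in $C$ by Arzela--Ascoli, while $\|f\|_{\mathcal L^{1}(\bigcup_{k>K}A_k^{\pm})}\ll\|f\|\sum_{k>K}(\sup_{A_k^{-}}v)\,|A_k^{-}|\ll\sum_{k>K}2^{-k(1-w)}\to0$ uniformly on the ball because $w<1$. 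For the inequality the decisive point is the behaviour near the endpoints: at $x\in A_k^{-}$ the two inverse branches of the doubling map are $x/2\in A_{k+1}^{-}$, which stays near the singularity, and $(x+1)/2$, which falls inside $A_0$ near $1/2$, where $v$ and the shell weights are comparable to constants; the pole at $1$ is handled symmetrically via the branch $x\mapsto(x+1)/2$. Tracking weights, the ``bad'' branch contributes to $\|\widehat\psi f\|$ a factor $\tfrac12\,v(x/2)/v(x)\asymp2^{w-1}$ of $\|f\|$ in the weighted sup-norm and a factor $\tfrac12\cdot2^{-\eta}\bigl(|A_k^{-}|/|A_{k+1}^{-}|\bigr)^{w+2\eta}=2^{w+\eta-1}$ of $\|f\|$ in the shell seminorm, both $<1$ precisely because $\eta<1-w$. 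The ``good'' branch and the action of $\widehat\psi$ on $A_0$ are governed by the classical Lasota--Yorke estimate for piecewise-affine expanding maps on spaces of (locally) H\"older functions; upon iterating, $\widehat\psi^{N}f(x)=2^{-N}\sum_{j}f\bigl((x+j)/2^{N}\bigr)$ is a Riemann sum for $\int_{I}f$ up to an error controlled by the H\"older seminorm plus a weighted-negligible contribution from the extreme indices $j=0,2^{N}-1$; assembling the pieces yields the stated inequality with $\theta$ as small as one wishes.

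The step I expect to be the genuine obstacle is the construction of $\mathcal M$ itself --- making the three exponents coexist. Condition (a) caps the size weight of $\mathcal M$ near $x^{-1/2}$; condition (b) forces that weight to be at least $v\asymp x^{-w}$ and forces the shell H\"older weight to be at least $|A_k|^{w+2\eta}$, where the hypothesis on $h'$, amplified by the factor $x^{-2}$ coming from $\xi'$, is what makes this term large; condition (c) forces $\tfrac12\,2^{w+\eta}<1$, i.e.\ $\eta<1-w$. Carrying this bookkeeping out near $1$ and across the dyadic partition with honest constants in place of $\asymp$ is what closes the admissible window at $w=1/3$, which is the origin of the restriction $w\in(0,1/3)$ in the hypothesis and --- via the convexity bound $|\zeta(s+it)|,|\zeta'(s+it)|\ll_{\varepsilon}|t|^{(1-s)/2+\varepsilon}$ --- of the restriction $s\in(1/3,1)$ in Corollary~\ref{cor: CLT for zeta} (see also Remark~\ref{rem: bound of 1/3}).
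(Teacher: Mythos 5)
Your route is genuinely different from the paper's. The paper works with a weighted-oscillation space of Keller/Blank type: the seminorm $\left|f\right|_{\alpha,\beta}=\limsup_{\epsilon\searrow0}\int\osc\left(R_{\alpha}f,B_{\epsilon}(x)\right)\epsilon^{-\beta}\mathrm{d}\lambda_I$ with the polynomial smoothing $R_{\alpha}f(x)=x^{\alpha}(1-x)^{\alpha}f(x)$, quasi-compactness via the Hennion--Herv\'e criterion with weak norm $\left\|\cdot\right\|_1$, and precompactness obtained through conditional expectations on dyadic cylinders; boundedness of $\widetilde{h}$ is then an $\epsilon$-by-$\epsilon$ estimate with the cut-off $K_{\epsilon}\asymp\epsilon^{(1-\beta)/2}$, and it is exactly this estimate that forces $\beta<1/3$, hence $w<1/3$. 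You instead build a weighted sup-norm plus dyadic-shell H\"older space, prove a Lasota--Yorke inequality branch by branch, and get precompactness from Arzel\`a--Ascoli on the shells plus uniformly small weighted tails. Your key quantitative inputs are sound: the bound $[\widetilde{h}]_{\eta,A_k}\ll\left|A_k\right|^{-(w+2\eta)}$ is precisely the two-scale interpolation between $\sup_{A_k}|\widetilde{h}|\ll\left|A_k\right|^{-w}$ and the Lipschitz bound $\left|A_k\right|^{-(w+2)}$ at scale $\left|A_k\right|^{2}$ (your ``convexity/Cauchy--Schwarz'' remark should simply be replaced by this), and the bad-branch factors $2^{w-1}$ and $2^{w+\eta-1}$ are computed correctly.

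Two points need attention. First, the one-step Lasota--Yorke constant is \emph{not} less than $1$: on a shell $A_k^{-}$ the good branch lands in $A_0$, whose seminorm carries weight $1$, so it contributes roughly $2^{-\eta-1}[f]_{\eta,A_0}$ on top of the bad-branch term $2^{w+\eta-1}$, and for $w$ near $1/3$ the sum exceeds $1$. The inequality therefore only comes out after iterating, summing over all $2^{N}$ branches and trading the mid-interval contributions against $\left\|f\right\|_1$ via the Riemann-sum argument you sketch (with the endpoint blocks controlled by $\int_0^{2^{-N}}v\asymp2^{-N(1-w)}$). You gesture at this but do not carry it out, and it is the actual core of condition (c); as written the ``decisive point'' paragraph proves too little. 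Second, your closing claim that this bookkeeping ``closes the admissible window at $w=1/3$'' is not supported by your own constraints: what you need is $w<1/2$ for (a) and $\eta<1-w$ for (c), with $\eta>0$ free, and nothing in your scheme bites at $1/3$. So either your construction, once the $N$-step estimate is done honestly, proves the statement for all $w<1/2$ (a strictly stronger result than the paper's -- not implausible, since in the paper the threshold $1/3$ is an artifact of the particular oscillation norm, cf.\ Remark \ref{rem: bound of 1/3}), or there is a hidden constraint you have not identified; you must resolve which, rather than asserting that the window closes at $1/3$. A minor further gap, shared with the paper: on $A_0$ you need local boundedness of the one-sided derivatives of $h$ (mere existence does not give it) to conclude $[\widetilde{h}]_{\eta,A_0}<\infty$; and completeness of your space $\mathcal{M}$ should at least be recorded.
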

\eqref{en: L2} and \eqref{en: spec gap} guarantee that Proposition \ref{prop: CLT dynsys} is applicable 
for all functions in $\mathcal{M}$ and \eqref{en: bounded} implies that $\widetilde{h}\in\mathcal{M}$ and thus we can apply Proposition \ref{prop: CLT dynsys} on $\widetilde{h}$ giving the statement of the theorem. 
\end{proof}

The remaining part of the proof is structured as follows: In Section \ref{subsec: Banach space} we introduce a Banach space and prove basic properties for it. In Section \ref{subsec: quasi-compactness} we show that $\widehat{\psi}$ is a quasi-compact operator on this Banach space, i.e.\ \eqref{en: spec gap} is fulfilled and in Section \ref{subsec: boundedness} we show that $\widetilde{h}$ is bounded with respect to the Banach space norm $\left\|\cdot\right\|$, i.e.\ \eqref{en: bounded} holds. 
The statement in \eqref{en: L2} will turn out to be obvious from the construction of the Banach space, see Lemma \ref{lem: L2}.

\subsection{Definition of the Banach space and first properties}\label{subsec: Banach space}
For a measurable function $f\colon I\to\mathbb{R}$ and a Borel subset $S$ of $I$ we define the oscillation on $S$ by
\begin{align*}
 \osc\left(f,S\right)\coloneqq \sup_{x\in S} f(x)-  \inf_{x\in S} f(x)
\end{align*}
and we set $\osc(\emptyset)\coloneqq 0$. 
Further, we denote by $B_{\epsilon}(x)$ the $\epsilon$-ball around $x$ and 
let $R_{\alpha}$ be an operator on the real valued functions on $I$ being defined as 
$R_{\alpha}f(x)\coloneqq x^{\alpha}\cdot\left(1-x\right)^{\alpha}\cdot f(x)$.
Then we define
\begin{align*}
 \left|f\right|_{\alpha,\beta}
 \coloneqq \limsup_{\epsilon\searrow 0} \int \frac{\osc\left(R_{\alpha}f, B_{\epsilon}(x)\right)}{\epsilon^{\beta}}\mathrm{d}\lambda_I(x).
\end{align*}
 Let
 \begin{align*}
  \left\|\cdot\right\|_{\alpha,\beta}=\left\|\cdot\right\|_2+\left|\cdot\right|_{\alpha,\beta}
 \end{align*}
  and set
 \begin{align*}
  \mathsf{V}_{\alpha,\beta}\coloneqq \left\{f\colon I\to\mathbb{R}\colon \left\|f\right\|_{\alpha,\beta}<\infty\right\}. 
 \end{align*}
Furthermore, we set  
 \begin{align*}
  \left\|\cdot\right\|'_{\alpha,\beta}=\left\|\cdot\right\|_1+\left|\cdot\right|_{\alpha,\beta}
 \end{align*}
  and
 \begin{align*}
  \mathsf{V}'_{\alpha,\beta}\coloneqq \left\{f\colon I\to\mathbb{R}\colon \left\|f\right\|'_{\alpha,\beta}<\infty\right\}. 
 \end{align*} 
A similar Banach space was considered in \cite{keller_generalized_1985} and \cite{blank_discreteness_1997} and in \cite{kessebohmer_intermediately_2019} for subshifts of finite type, \cite{kessebohmer_intermediately_2019},
however not using the smoothing operator $R_{\alpha}$. 
 
The reason we define two different Banach spaces (Lemma \ref{lem: norms} and Lemma \ref{lem: completeness} will show that the spaces are indeed Banach spaces) is that we will show that $\widehat{\psi}$ is quasi-compact on $\big(\mathsf{V}'_{\alpha,\beta},\left\|\cdot\right\|'_{\alpha,\beta}\big)$ and then conclude that $\widehat{\psi}$ is also quasi-compact on $\big(\mathsf{V}_{\alpha,\beta},\left\|\cdot\right\|_{\alpha,\beta}\big)$.

We start with showing that $\left\|\cdot\right\|_{\alpha,\beta}$ and $\left\|\cdot\right\|'_{\alpha,\beta}$ are indeed norms. 
\begin{lemma}\label{lem: norms}
For all $\alpha\in(0,1)$, $\beta\in(0,1]$ we have that
 $\left\|\cdot\right\|_{\alpha,\beta}$ and $\left\|\cdot\right\|'_{\alpha,\beta}$ are norms. 
\end{lemma}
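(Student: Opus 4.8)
The plan is to verify the norm axioms by reducing everything to elementary properties of the oscillation, together with the fact that $\left\|\cdot\right\|_1$ and $\left\|\cdot\right\|_2$ are already norms on the respective $\mathcal{L}^p$-equivalence classes. First I would record that for any measurable $g,g_1,g_2\colon I\to\mathbb{R}$, any Borel set $S\subset I$, and any $c\in\mathbb{R}$ one has $\osc\left(cg,S\right)=\left|c\right|\cdot\osc\left(g,S\right)$ and the subadditivity $\osc\left(g_1+g_2,S\right)\le \osc\left(g_1,S\right)+\osc\left(g_2,S\right)$, both of which follow at once from $\sup\left(a+b\right)\le\sup a+\sup b$ and $\inf\left(a+b\right)\ge\inf a+\inf b$ (and trivially in the degenerate case $S=\emptyset$). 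Since the smoothing operator $R_{\alpha}$ is linear, these carry over to $R_{\alpha}f$, namely $\osc\left(R_{\alpha}\left(cf\right),S\right)=\left|c\right|\cdot\osc\left(R_{\alpha}f,S\right)$ and $\osc\left(R_{\alpha}\left(f_1+f_2\right),S\right)\le\osc\left(R_{\alpha}f_1,S\right)+\osc\left(R_{\alpha}f_2,S\right)$. One should also note in passing that $x\mapsto\osc\left(R_{\alpha}f,B_{\epsilon}(x)\right)$ is measurable (it is in fact lower semicontinuous in $x$), so that the defining integral, and hence $\left|f\right|_{\alpha,\beta}\in[0,\infty]$, is meaningful; this is standard in this circle of ideas.

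Second, I would deduce from this that $\left|\cdot\right|_{\alpha,\beta}$ is a $[0,\infty]$-valued seminorm. Non-negativity is clear since $\osc\ge 0$. Homogeneity follows by pulling the factor $\left|c\right|$ out of the integrand and then out of the $\limsup_{\epsilon\searrow 0}$. For the triangle inequality one integrates the pointwise oscillation bound against $\lambda_I$, divides by $\epsilon^{\beta}$, uses monotonicity and additivity of the integral, and finally invokes the subadditivity of the $\limsup$, i.e.\ $\limsup_{\epsilon\searrow 0}\left(a_{\epsilon}+b_{\epsilon}\right)\le\limsup_{\epsilon\searrow 0}a_{\epsilon}+\limsup_{\epsilon\searrow 0}b_{\epsilon}$, to obtain $\left|f_1+f_2\right|_{\alpha,\beta}\le\left|f_1\right|_{\alpha,\beta}+\left|f_2\right|_{\alpha,\beta}$. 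This argument works verbatim for every $\alpha\in(0,1)$, $\beta\in(0,1]$ and uses nothing about these parameters.

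Third, since $\left\|\cdot\right\|_2$ is a norm and $\left|\cdot\right|_{\alpha,\beta}$ is a seminorm, their sum $\left\|\cdot\right\|_{\alpha,\beta}$ is again a norm: homogeneity and the triangle inequality are inherited termwise, and positive definiteness is forced by the first summand, since $\left\|f\right\|_{\alpha,\beta}=0$ implies $\left\|f\right\|_2=0$ and hence $f=0$ $\lambda_I$-almost everywhere. The same reasoning with $\left\|\cdot\right\|_1$ in place of $\left\|\cdot\right\|_2$ shows that $\left\|\cdot\right\|'_{\alpha,\beta}$ is a norm.

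There is no serious obstacle here; the only point that is not completely formal is the measurability of $x\mapsto\osc\left(R_{\alpha}f,B_{\epsilon}(x)\right)$ needed for the integral to make sense, which I would dispatch by the semicontinuity remark above. It is worth emphasising that $\left|\cdot\right|_{\alpha,\beta}$ alone is genuinely only a seminorm and not a norm — for instance $f(x)=x^{-\alpha}\cdot\left(1-x\right)^{-\alpha}$ satisfies $R_{\alpha}f\equiv 1$, whence $\left|f\right|_{\alpha,\beta}=0$ although $f\not\equiv 0$ — which is precisely why the $\mathcal{L}^p$-term is added, and this distinction will matter again when completeness of the spaces is established in Lemma \ref{lem: completeness}.
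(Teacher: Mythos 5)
Your argument is correct and follows essentially the same route as the paper: subadditivity of the oscillation together with linearity of $R_{\alpha}$ gives the triangle inequality for $\left|\cdot\right|_{\alpha,\beta}$, homogeneity is immediate, and positive definiteness (up to $\lambda_I$-almost sure equality) is supplied by the $\left\|\cdot\right\|_2$ resp.\ $\left\|\cdot\right\|_1$ summand. Your additional remarks on measurability of $x\mapsto\osc\left(R_{\alpha}f,B_{\epsilon}(x)\right)$ and the example $f(x)=x^{-\alpha}\cdot\left(1-x\right)^{-\alpha}$ showing $\left|\cdot\right|_{\alpha,\beta}$ alone is only a seminorm are correct but not needed beyond what the paper records.
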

\begin{proof}
 We have for $f,g\in \mathsf{V}_{\alpha,\beta}$ that 
\begin{align*}
 \left|f+g\right|_{\alpha,\beta}
 &=\limsup_{\epsilon\searrow 0} \int \frac{\osc\left(R_{\alpha}\left(f+g\right), B_{\epsilon}(x)\right)}{\epsilon^{\beta}}\mathrm{d}\lambda_I(x)\\
 &=\limsup_{\epsilon\searrow 0} \int \frac{\osc\left(R_{\alpha}f+R_{\alpha}g, B_{\epsilon}(x)\right)}{\epsilon^{\beta}}\mathrm{d}\lambda_I(x)\\
 &\leq \limsup_{\epsilon\searrow 0} \int \frac{\osc\left(R_{\alpha}f, B_{\epsilon}(x)\right)}{\epsilon^{\beta}}\mathrm{d}\lambda_I(x)
 +\limsup_{\epsilon\searrow 0} \int \frac{\osc\left(R_{\alpha}g, B_{\epsilon}(x)\right)}{\epsilon^{\beta}}\mathrm{d}\lambda_I(x)\\
 &=\left|f\right|_{\alpha,\beta}+\left|g\right|_{\alpha,\beta}
\end{align*}
and thus
 \begin{align*}
  \left\|f+g\right\|_{\alpha,\beta}
  =\left\|f+g\right\|_2+\left|f+g\right|_{\alpha,\beta} 
  \leq \left\|f\right\|_2+\left\|g\right\|_2+\left|f\right|_{\alpha,\beta}+\left|g\right|_{\alpha,\beta}
  =\left\|f\right\|_{\alpha,\beta}+\left\|g\right\|_{\alpha,\beta}.
 \end{align*}
It is obviously true that $\left\|af\right\|_{\alpha,\beta}=a\left\|f\right\|_{\alpha,\beta}$, for any positive $a$
and since $\left\|\cdot\right\|_2$ is already a norm and $\left|f\right|_{\alpha,\beta}=0$ if $f=0$ almost surely, we
know that $\left\|f\right\|_{\alpha,\beta}=0$ if and only if $f=0$ almost surely.

The proof for $\left\|\cdot\right\|'_{\alpha,\beta}$ follows analogously.
\end{proof}

In order to verify that $\mathsf{V}_{\alpha,\beta}$ and $\mathsf{V}'_{\alpha,\beta}$ are Banach spaces we have to verify completeness.
\begin{lemma}\label{lem: completeness}
For $\alpha\in(0,1)$ and $\beta\in (0,1]$ we have that
 $\mathsf{V}_{\alpha,\beta}$ and $\mathsf{V}'_{\alpha,\beta}$ are complete. 
\end{lemma}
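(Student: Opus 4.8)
The plan is to run the standard completion argument for spaces of the form ``oscillation seminorm plus $\mathcal{L}^p$-norm'' (cf.\ \cite{keller_generalized_1985, blank_discreteness_1997}), the only new feature being the multiplier $R_{\alpha}$, which is multiplication by the bounded continuous function $x\mapsto x^{\alpha}(1-x)^{\alpha}$ and therefore interferes with none of the estimates. Concretely: given a Cauchy sequence $(f_n)$ in $\big(\mathsf{V}_{\alpha,\beta},\left\|\cdot\right\|_{\alpha,\beta}\big)$, since $\left\|\cdot\right\|_2\le\left\|\cdot\right\|_{\alpha,\beta}$ it is Cauchy in $\mathcal{L}^2_{\lambda_I}$ and hence converges there to some $f$; a Cauchy sequence converges as soon as a subsequence does, so I would pass to a subsequence with $f_n\to f$ $\lambda_I$-a.e.\ and take this pointwise a.e.\ limit as the representative of $f$. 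It then suffices to show $\left|f_n-f\right|_{\alpha,\beta}\to 0$: combined with $\left\|f_n-f\right\|_2\to0$ this gives $\left\|f_n-f\right\|_{\alpha,\beta}\to0$, and $\left|f\right|_{\alpha,\beta}\le\left|f_N\right|_{\alpha,\beta}+\left|f_N-f\right|_{\alpha,\beta}<\infty$ together with $f\in\mathcal{L}^2_{\lambda_I}$ shows $f\in\mathsf{V}_{\alpha,\beta}$.

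The key step will be lower semicontinuity of the oscillation seminorm along a.e.-convergent sequences: if $u_k\to u$ $\lambda_I$-a.e., then $\left|u\right|_{\alpha,\beta}\le\liminf_k\left|u_k\right|_{\alpha,\beta}$. At a fixed scale this is elementary — writing $\osc(v,B_{\epsilon}(x))=\sup_{y,z\in B_{\epsilon}(x)}\big(v(y)-v(z)\big)$ (read, so that $\left|\cdot\right|_{\alpha,\beta}$ descends to $\mathcal{L}^2$-classes, in the essential sense, and using Egorov's theorem to turn a.e.\ convergence into the control one needs) together with $R_{\alpha}u_k\to R_{\alpha}u$ a.e.\ gives $\osc(R_{\alpha}u,B_{\epsilon}(x))\le\liminf_k\osc(R_{\alpha}u_k,B_{\epsilon}(x))$ for a.e.\ $x$, and Fatou's lemma then yields, for each fixed $\epsilon>0$,
\begin{align*}
 \epsilon^{-\beta}\int\osc\big(R_{\alpha}u,B_{\epsilon}(x)\big)\,\mathrm{d}\lambda_I(x)\;\le\;\liminf_k\;\epsilon^{-\beta}\int\osc\big(R_{\alpha}u_k,B_{\epsilon}(x)\big)\,\mathrm{d}\lambda_I(x).
\end{align*}
Taking the $\limsup$ as $\epsilon\searrow 0$ and pushing it through the $\liminf_k$ is the one point that is not purely formal; I would handle it as in \cite{keller_generalized_1985, blank_discreteness_1997}, via the subadditivity-type control of $\epsilon\mapsto\int\osc(R_{\alpha}\,\cdot\,,B_{\epsilon}(x))\,\mathrm{d}\lambda_I(x)$ near $\epsilon=0$, equivalently by first checking that $\left\|\cdot\right\|_{\alpha,\beta}$ may be computed with $\sup_{0<\epsilon\le\epsilon_0}$ in place of $\limsup_{\epsilon\searrow0}$, for which the interchange is automatic (a supremum of lower semicontinuous functions is lower semicontinuous). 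Either way one obtains $\left|u\right|_{\alpha,\beta}\le\liminf_k\left|u_k\right|_{\alpha,\beta}$.

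With lower semicontinuity in hand the proof would close quickly: given $\eta>0$, pick $N$ with $\left|f_n-f_m\right|_{\alpha,\beta}<\eta$ for all $n,m\ge N$; fixing $n\ge N$, the sequence $u_k\coloneqq f_n-f_k$ ($k\ge n$) tends $\lambda_I$-a.e.\ to $f_n-f$, so $\left|f_n-f\right|_{\alpha,\beta}\le\liminf_{k}\left|f_n-f_k\right|_{\alpha,\beta}\le\eta$, hence $\left|f_n-f\right|_{\alpha,\beta}\to0$ and, by the reduction above, $f\in\mathsf{V}_{\alpha,\beta}$ with $f_n\to f$ in $\left\|\cdot\right\|_{\alpha,\beta}$. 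The argument for $\mathsf{V}'_{\alpha,\beta}$ is word for word the same, with $\mathcal{L}^1_{\lambda_I}$ and $\left\|\cdot\right\|_1$ in place of $\mathcal{L}^2_{\lambda_I}$ and $\left\|\cdot\right\|_2$ (using $\left\|\cdot\right\|_1\le\left\|\cdot\right\|'_{\alpha,\beta}$ and completeness of $\mathcal{L}^1_{\lambda_I}$). The step I expect to be the real work is the lower semicontinuity claim — concretely, the passage to $\limsup_{\epsilon\searrow0}$ and the care needed in choosing the representative of the $\mathcal{L}^p$-limit so that a.e.\ convergence is enough.
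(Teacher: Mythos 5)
Your overall route is the same as the paper's (which compresses Blank's Lemma 2.3.17 into a few lines): take the $\mathcal{L}^2$- (resp.\ $\mathcal{L}^1$-) limit $f$, pass to an a.e.\ convergent subsequence, and use Fatou-type lower semicontinuity of the seminorm to get $\left|f_k-f\right|_{\alpha,\beta}\leq\liminf_{\ell}\left|f_k-f_\ell\right|_{\alpha,\beta}$; you are in fact more explicit than the paper about the choice of representative and the a.e.\ convergence needed for the oscillation. The gap is precisely at the step you single out and then defer. Your proposed justification, that $\left\|\cdot\right\|_{\alpha,\beta}$ ``may be computed with $\sup_{0<\epsilon\leq\epsilon_0}$ in place of $\limsup_{\epsilon\searrow0}$'', is false for the seminorm as defined in this paper: for $\beta<1$ any bounded step function with finitely many jumps has $\limsup_{\epsilon\searrow0}\epsilon^{-\beta}\int\osc\left(R_{\alpha}f,B_{\epsilon}(x)\right)\mathrm{d}\lambda_I(x)=0$ (the oscillation is confined to an $O(\epsilon)$-neighbourhood of the jumps, and $\int\osc\left(R_{\alpha}\mathbbm{1},B_\epsilon(x)\right)\mathrm{d}\lambda_I(x)=O(\epsilon)$ as well), whereas its $\sup_{0<\epsilon\leq\epsilon_0}$ counterpart can be made arbitrarily large at a fixed scale while $\left\|\cdot\right\|_2$ stays arbitrarily small. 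So the two quantities are not equivalent norms and cannot simply be swapped; likewise the doubling/subadditivity bound $\int\osc(\cdot,B_{2\epsilon})\leq 3\int\osc(\cdot,B_{\epsilon})+O(\epsilon)$ only relates comparable scales with a constant per doubling and does not bridge ``all $\epsilon$ small depending on $\ell$'' to a fixed $\epsilon$.

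Concretely, the Cauchy hypothesis only gives $\epsilon^{-\beta}\int\osc\left(R_{\alpha}(f_k-f_\ell),B_{\epsilon}(x)\right)\mathrm{d}\lambda_I(x)<\delta$ for $\epsilon<\epsilon_0(k,\ell)$, and $\epsilon_0(k,\ell)$ may shrink as $\ell\to\infty$, so your fixed-$\epsilon$ Fatou bound does not yield $\left|f_k-f\right|_{\alpha,\beta}\leq\delta$. This is not repairable by more care with the same definition: placing in disjoint blocks $I_i$ (inside, say, $[1/4,3/4]$, with $\lambda_I(I_i)=2^{-i}$) ``combs'' of height $1$ with tooth spacing $a_i$ satisfying $a_i^{\beta}=\lambda_I(I_i)$ and tooth width so small that the $\mathcal{L}^2$-tails are summable, the partial sums are $\left\|\cdot\right\|_{\alpha,\beta}$-Cauchy (every difference is a finite step function, hence has seminorm $0$), yet for every $n$ and every representative of the $\mathcal{L}^2$-limit one has, at scale $\epsilon=a_i$ with $i>n$, an oscillation integral of order $\lambda_I(I_i)=a_i^\beta$ over $I_i$, so $\left|f-f_n\right|_{\alpha,\beta}$ stays bounded away from $0$. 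Thus with the literal $\limsup$ definition the completeness claim itself is in doubt. Note that the paper's own proof has exactly the same defect: its single appeal to Fatou tacitly treats the seminorm as the Keller--Blank one, defined with a supremum over $\epsilon\in(0,\epsilon_0]$. Under that (evidently intended) definition your argument closes cleanly — the hypothesis then controls every fixed $\epsilon\leq\epsilon_0$ uniformly in $\ell$, Fatou applies scale by scale, and taking the supremum is indeed automatic — so the correct fix is to work with the $\sup$-type seminorm from the outset (or restate the lemma for it), not to prove the two definitions agree.
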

\begin{proof}
We first show that $\mathsf{V}_{\alpha,\beta}$ is complete by following the proof in \cite[Lemma 2.3.17]{blank_discreteness_1997}. 
Let $\left(f_n\right)$ be a Cauchy sequence with respect to $\left\|\cdot\right\|_{\alpha,\beta}$. 
Then, in particular $\left(f_n\right)$ is also a Cauchy sequence with respect to $\left\|\cdot\right\|_{2}$,
we set $f$ as its limit. 
So our next step is to prove that $f\in \mathsf{V}_{\alpha,\beta}$. 
Since $\left(f_n\right)$ is a Cauchy sequence with respect to $\left\|\cdot\right\|_{\alpha,\beta}$, 
for each $\delta>0$ we can choose $L>0$
such that $\left\|f_{k}-f_{\ell}\right\|_{\alpha,\beta}<\delta$ for all $k,\ell>L$. Then we have that
\begin{align*}
 \left\|f_{k}-f_{\ell}\right\|_{\alpha,\beta}
 =\left\|f_{k}-f_{\ell}\right\|_{2}+
 \limsup_{\epsilon\searrow0}\frac{\int\osc\left(R_{\alpha}\left(f_{k}-f_{\ell}\right),B\left(\epsilon,x\right)\right)\;\mathrm{d}\lambda_I\left(x\right)}{\epsilon^{\beta}}<\delta.
\end{align*}
By Fatou's lemma and the linearity of the operator $R_{\alpha}$ we have that the limit $\ell\to \infty$ of the sequence $f_{\ell}$
on the right hand side exists and thus also on the left hand side which implies
\begin{align*}
 \left\|f_{k}-f\right\|_{\alpha,\beta}
 =\left\|f_{k}-f\right\|_{2}+\limsup_{\epsilon\searrow0}\frac{\int\osc\left(R_{\alpha}\left(f_{k}-f\right),B\left(\epsilon,x\right)\right)\;\mathrm{d}\lambda_I\left(x\right)}{\epsilon^{\beta}}<\delta.
\end{align*}
Thus, $f\in\mathsf{V}_{\alpha,\beta}$ 
and $\left(f_{n}\right)$ converges to $f$ with respect to $\left\|\cdot\right\|_{\alpha,\beta}$ giving completeness.

The proof for $\big(\mathsf{V}'_{\alpha,\beta},\left\|\cdot\right\|'_{\alpha,\beta}\big)$ follows analogously. 
\end{proof} 

That \eqref{en: L2} holds for $\mathsf{V}_{\alpha,\beta}$ is obvious and we obtain the following lemma.
\begin{lemma}\label{lem: L2}
 We have that $\mathsf{V}_{\alpha,\beta}\subset \mathcal{L}^2_{\mu}$.
\end{lemma}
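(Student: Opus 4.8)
The plan is to unwind the definition of $\mathsf{V}_{\alpha,\beta}$ and reduce the claim to the trivial observation that membership in $\mathsf{V}_{\alpha,\beta}$ forces finiteness of the $\|\cdot\|_2$-norm. Recall that $\|\cdot\|_{\alpha,\beta} = \|\cdot\|_2 + |\cdot|_{\alpha,\beta}$ with $|\cdot|_{\alpha,\beta} \geq 0$, so $f \in \mathsf{V}_{\alpha,\beta}$ means $\|f\|_{\alpha,\beta} < \infty$, which immediately yields $\|f\|_2 < \infty$, i.e. $f \in \mathcal{L}^2_{\lambda_I}$.

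The only genuine point to address is the discrepancy between the measure $\lambda_I$, with respect to which the $\|\cdot\|_2$-norm in the definition of $\mathsf{V}_{\alpha,\beta}$ is taken, and the measure $\mu$ appearing in the statement $\mathsf{V}_{\alpha,\beta} \subset \mathcal{L}^2_\mu$. These live on different spaces ($I$ versus $\mathbb{R}$), and the comparison is made through the isomorphism $\xi\colon I \to \mathbb{R}$, $\xi(x) = \cot(\pi x)$, which by the earlier part of the proof of Theorem~\ref{thm: main thm} is measure preserving: $\mu(B) = \lambda_I(\xi^{-1}B)$ for all $B \in \mathcal{B}_{\mathbb{R}}$. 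Hence for any $f\colon I \to \mathbb{R}$ one has $\int_\mathbb{R} |f \circ \xi^{-1}|^2\,\mathrm{d}\mu = \int_I |f|^2\,\mathrm{d}\lambda_I$, so $f \in \mathcal{L}^2_{\lambda_I}$ on $I$ corresponds precisely to $f \circ \xi^{-1} \in \mathcal{L}^2_\mu$ on $\mathbb{R}$; identifying functions on $I$ with functions on $\mathbb{R}$ via this isomorphism (as is done throughout the proof, e.g. in writing $\widetilde{h} = h \circ \xi$ and comparing Birkhoff sums), the inclusion $\mathsf{V}_{\alpha,\beta} \subset \mathcal{L}^2_\mu$ is just a restatement of $\mathsf{V}_{\alpha,\beta} \subset \mathcal{L}^2_{\lambda_I}$.

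So the proof is essentially a one-liner: if $f \in \mathsf{V}_{\alpha,\beta}$ then $\|f\|_2 \leq \|f\|_{\alpha,\beta} < \infty$, hence $f \in \mathcal{L}^2_{\lambda_I}$, and by the measure-preserving property of $\xi$ this is the same as $f \in \mathcal{L}^2_\mu$ under the canonical identification. I would spell out the one chain of inequalities $\|f\|_2 \le \|f\|_2 + |f|_{\alpha,\beta} = \|f\|_{\alpha,\beta} < \infty$ and remind the reader that $\mathcal{L}^2_{\lambda_I}$ and $\mathcal{L}^2_\mu$ are identified via $\xi$, and that would be the whole argument.

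There is no real obstacle here; the lemma is stated precisely because property~(a) of Proposition~\ref{prop: main prop} must be verified for the Banach space that will ultimately be used, and it is trivially built into the definition of the norm. The only thing one must not gloss over is the bookkeeping about which measure and which underlying space one is working on — making explicit that the $\|\cdot\|_2$ in $\|\cdot\|_{\alpha,\beta}$ is the $L^2(\lambda_I)$ norm and that $\xi$ transports this to $L^2(\mu)$ — but this requires no computation beyond invoking the isomorphism established at the start of the proof of Theorem~\ref{thm: main thm} via \cite[Proposition 1.2]{prykarpatski_ergodic_2015}.
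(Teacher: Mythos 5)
Your argument is correct and coincides with the paper's, which simply declares the inclusion obvious from the construction of the norm, since $\|f\|_2\le\|f\|_{\alpha,\beta}<\infty$ for every $f\in\mathsf{V}_{\alpha,\beta}$. Your extra bookkeeping --- noting that the $\|\cdot\|_2$ in the definition is the $\mathcal{L}^2_{\lambda_I}$-norm on $I$ and that this is transported to $\mathcal{L}^2_{\mu}$ via the measure-preserving isomorphism $\xi$ (equivalently, one could note that $\mu$ restricted to $I$ has density bounded by $1/\pi$ with respect to $\lambda_I$) --- is a harmless and sensible clarification of a point the paper leaves implicit.
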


\subsection{Quasi-compactness}\label{subsec: quasi-compactness}
In this section we will show quasi-compactness of the operator $\widehat{\psi}$ on $\big(\mathsf{V}_{\alpha,\beta},\left\|\cdot\right\|_{\alpha,\beta}\big)$. However, we will start with proving quasi-compactness on $\big(\mathsf{V}'_{\alpha,\beta},\left\|\cdot\right\|'_{\alpha,\beta}\big)$.
\begin{lemma}\label{lem: quasi-compact}
If $0<\alpha<\beta\leq 1$, then $\widehat{\psi}$ is quasi-compact on  $\big(\mathsf{V}'_{\alpha,\beta},\left\|\cdot\right\|'_{\alpha,\beta}\big)$.
\end{lemma}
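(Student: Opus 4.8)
The plan is to prove quasi-compactness via a Lasota--Yorke (Doeblin--Fortet) inequality: I would show that there exist constants $\rho_0\in(0,1)$ and $C_0,C_1>0$ together with an integer $N\ge1$ such that for every $f\in\mathsf{V}'_{\alpha,\beta}$
\begin{align*}
\bigl|\widehat{\psi}^{\,N}f\bigr|_{\alpha,\beta}\le \rho_0\,|f|_{\alpha,\beta}+C_0\|f\|_1,\qquad
\bigl\|\widehat{\psi}^{\,N}f\bigr\|_1\le C_1\|f\|_1,
\end{align*}
and then invoke the Hennion--Ionescu-Tulcea--Marinescu theorem, using that the embedding of $\mathsf{V}'_{\alpha,\beta}$ into $(\mathcal{L}^1_{\lambda_I},\|\cdot\|_1)$ is compact. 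The $\mathcal{L}^1$-contraction is immediate because $\widehat{\psi}$ is a Markov (transfer) operator, so the whole game is the seminorm inequality. The doubling map $\psi(x)=2x\bmod 1$ has the two inverse branches $y\mapsto y/2$ and $y\mapsto (y+1)/2$, each with constant slope $1/2$, so $(\widehat\psi f)(x)=\tfrac12\bigl(f(x/2)+f((x+1)/2)\bigr)$; iterating, $\widehat\psi^{\,n}$ averages $f$ over the $2^n$ branches of $\psi^n$, each of slope $2^{-n}$.

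The core computation is to estimate $\osc(R_\alpha\widehat\psi^{\,n}f,B_\epsilon(x))$ in terms of $\osc(R_\alpha f,\cdot)$ on preimage balls of radius $\sim 2^{-n}\epsilon$. The multiplicative weight $x^\alpha(1-x)^\alpha$ in $R_\alpha$ is what makes this delicate near the endpoints $0$ and $1$, exactly where the inverse branches of $\psi^n$ land; one must compare $R_\alpha(\widehat\psi^{\,n}f)$ with $\widehat\psi^{\,n}(R_\alpha f)$ and control the discrepancy coming from the ratio of the weight at a point to the weight at its preimages. Away from the endpoints this ratio is bounded, while near an endpoint one uses that $\alpha<1$ together with the contraction of the branches to absorb the singular behaviour; a standard identity $\osc(uv,S)\le \|u\|_{\infty,S}\osc(v,S)+\|v\|_{\infty,S}\osc(u,S)$ applied to $u=$ (weight ratio), $v=R_\alpha f$, reduces everything to (i) a term of the form $2^{-n\beta}|f|_{\alpha,\beta}$ coming from the slopes $2^{-n}$ raised to the power $\beta$ in the denominator $\epsilon^\beta$, and (ii) lower-order terms controlled by $\|f\|_1$ via the bound $\osc(g,B_\epsilon(x))\le 2\sup_{B_\epsilon(x)}|g|$ integrated against $\lambda_I$ and a covering argument. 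Choosing $N$ large enough that $2^{-N\beta}<1$ then yields the Lasota--Yorke inequality with $\rho_0=2^{-N\beta}$, where the hypothesis $\alpha<\beta$ is precisely what guarantees the endpoint error terms are of lower order and get swallowed into the $\|f\|_1$ part rather than into $|f|_{\alpha,\beta}$.

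With the Lasota--Yorke inequality in hand, quasi-compactness on $\bigl(\mathsf{V}'_{\alpha,\beta},\|\cdot\|'_{\alpha,\beta}\bigr)$ follows from the abstract Ionescu-Tulcea--Marinescu / Hennion theorem: the essential spectral radius of $\widehat\psi$ is at most $\rho_0^{1/N}<1=\rho(\widehat\psi)$, so there is a spectral gap and a finite-dimensional peripheral part, which is exactly the decomposition $\mathcal M=G\oplus H$ required in Definition~\ref{def: quasi-compact}. I expect the main obstacle to be the bookkeeping near the endpoints: making rigorous that the weight ratio $\bigl(x^\alpha(1-x)^\alpha\bigr)\big/\bigl((x/2^n)^\alpha(1-x/2^n)^\alpha\bigr)$ and its analogues for the other branches are, after integration in $x$ and division by $\epsilon^\beta$, genuinely of lower order in the relevant sense — this is where the choice $0<\alpha<\beta\le1$ is used and where one must be careful that the $\limsup_{\epsilon\searrow0}$ and the integral interact correctly. (The passage from $\mathsf{V}'_{\alpha,\beta}$ to $\mathsf{V}_{\alpha,\beta}$ announced in the text is deferred and not part of this lemma.)
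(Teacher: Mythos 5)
Your overall strategy coincides with the paper's: a Doeblin--Fortet/Lasota--Yorke inequality for $\widehat\psi$ with the weak norm $\|\cdot\|_1$, compactness into $\mathcal{L}^1$, and the Hennion--Ionescu-Tulcea--Marinescu theorem (the paper uses \cite[Theorem II.5]{hennion_limit_2001}, with $n_0=1$, after checking $\rho(\widehat\psi)=1$ via $\widehat\psi\mathbbm{1}=\mathbbm{1}$). However, your key estimate is stated incorrectly: the contraction coefficient cannot be $2^{-N\beta}$, and the weight-ratio discrepancy cannot be shifted into the $C_0\|f\|_1$ term. For the leftmost inverse branch $y=x/2^{N}$ the ratio $x^{\alpha}(1-x)^{\alpha}/\bigl(y^{\alpha}(1-y)^{\alpha}\bigr)=2^{N\alpha}\bigl((1-2^{N}y)/(1-y)\bigr)^{\alpha}$ is of size $2^{N\alpha}$ near $y=0$, and it multiplies $\osc(R_{\alpha}f,\cdot)$ itself: taking $f$ supported in a tiny interval near $0$ with $R_{\alpha}f$ uniformly $\beta$-rough (Weierstrass-type), one gets $|\widehat\psi^{N}f|_{\alpha,\beta}$ close to $2^{N(\alpha-\beta)}|f|_{\alpha,\beta}$ while $\|f\|_1/|f|_{\alpha,\beta}$ is arbitrarily small, so the inequality you announce is false. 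The correct one-step coefficient, which is what the paper derives, is $2^{\alpha-\beta}$: the Jacobian $1/2$ is cancelled by the change of variables in the integral, the weight contributes $2^{\alpha}$ to the leading term, and rescaling the ball radius contributes $2^{-\beta}$; this is precisely where $\alpha<\beta$ is used, not in making ``endpoint errors'' lower order. The genuine lower-order terms are the cross terms $\|R_{\alpha}f\|_{\infty}\cdot\osc(\text{weight ratio},B_{\epsilon})$; they are not controlled by $\|f\|_1$ (note $\|R_{\alpha}f\|_{\infty}$ is not dominated by $\|f\|_1$), but since the ratio has oscillation $O(\epsilon)$ on $\epsilon$-balls they vanish in the $\limsup_{\epsilon\searrow 0}$ after dividing by $\epsilon^{\beta}$. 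Your plan survives, but only with these corrections; as stated, the central inequality would fail.

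Second, you invoke the compactness of the embedding of the $\|\cdot\|'_{\alpha,\beta}$-unit ball into $\mathcal{L}^1$ as if it were known. It is not a citation: in the paper this is the longest part of the proof, and the weight $R_{\alpha}$ matters. One approximates $f$ by conditional expectations on dyadic cylinders, bounds $\|f-\mathbb{E}(f\vert\mathcal{B}_m)\|_1$ on $[\delta_{\epsilon},1-\delta_{\epsilon}]$ by a term of order $\delta_{\epsilon}^{-\alpha}\cdot 2^{-m\beta}\cdot|f|_{\alpha,\beta}$, handles the endpoint strips by integrability of $f$ near $0$ and $1$, and then runs a diagonal argument together with lower semicontinuity of $|\cdot|_{\alpha,\beta}$ along $\mathcal{L}^1$-limits. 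Keller's and Blank's compactness results concern the unweighted oscillation seminorm, so at minimum you must adapt that argument to $R_{\alpha}$; in your proposal this ingredient is assumed rather than supplied.
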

To prove this lemma we will use the following lemma by Hennion and Herv\'e giving sufficient conditions for quasi-compactness which is based on results by Ionescu-Tulcea and Marinescu, \cite{ionescu_theorie_1950}, 
and Doeblin and Fortet, \cite{doeblin_sur_1937}.

\begin{lemma}[{\cite[Theorem II.5]{hennion_limit_2001}}]\label{lem: hennion herve}
Suppose $\left(\mathcal{L} ,\left\|\cdot\right\|\right)$ is a Banach space and $U:\mathcal{L}\to\mathcal{L}$ is a bounded linear
operator with spectral radius $\rho(U)$ equal to $1$. Assume that there exists a semi-norm $\left|\cdot\right|'$ with
the following properties:
\begin{enumerate}[(i)]
\item\label{Hennion 1} $\left|\cdot\right|'$ is continuous on $\mathcal{L}$.
\item\label{Hennion 2} $U$ is bounded on $\mathcal{L}$ with respect to $\left|\cdot\right|'$, i.e.\ there exists $M > 0$ such that $\left|U f \right|'\leq  M \left|f\right|'$, for all $f \in\mathcal{L}$.
\item\label{Hennion 3} There exist constants $r\in(0,1)$, $R>0$, and $n_0\in\mathbb{N}$ such that 
\begin{align}
\left\|U^{n_0} f\right\|\leq r^{n_0}\cdot\left\|f\right\|+R\cdot \left|f\right|',\label{mar tul ios}
\end{align}
for all $f \in\mathcal{L}$.
\item\label{Hennion 4} 
$U\left\{f\in\mathcal{L}\colon\left|f\right|'<1\right\}$ is precompact on $\left(\mathcal{L},\left|\cdot\right|'\right)$, i.e.\ for each sequence $\left(f_n\right)_{n\in\mathbb{N}}$ with values in $\mathcal{L}$ fulfilling $\sup_{n\in\mathbb{N}} \left|f_n\right|'\leq 1$ there exists
a subsequence $\left(n_k\right)$ and $g \in \mathcal{L}$ such that  
\begin{align*}
\lim_{k\to\infty}\left|U f_{n_k}-g\right|'=0.
\end{align*}
\end{enumerate}

Then $U$ is quasi-compact.
\end{lemma}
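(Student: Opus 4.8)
The plan is to deduce quasi-compactness from an estimate on the \emph{essential} spectral radius $r_{\mathrm{ess}}(U)$ of $U$ on $(\mathcal{L},\left\|\cdot\right\|)$, rather than constructing the decomposition of Definition \ref{def: quasi-compact} by hand. First I would record that hypothesis~(i), continuity of the seminorm $\left|\cdot\right|'$ on $(\mathcal{L},\left\|\cdot\right\|)$, provides a constant $C>0$ with $\left|f\right|'\le C\left\|f\right\|$ for all $f\in\mathcal{L}$. The goal is then to prove $r_{\mathrm{ess}}(U)\le r$; since $r<1=\rho(U)$, the standard Riesz theory of operators whose spectrum in $\{|z|>r_{\mathrm{ess}}(U)\}$ is discrete will hand back the splitting $\mathcal{L}=G\oplus H$.

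The first substantive step is an extraction argument combining hypotheses~(ii) and~(iv). Given a sequence $(f_n)$ with $\left\|f_n\right\|\le 1$, we have $\left|f_n\right|'\le C$, so hypothesis~(iv) applied to $(C^{-1}f_n)$ yields a subsequence along which $(Uf_n)$ is $\left|\cdot\right|'$-Cauchy; since $\left|Uf_n\right|'\le M\left|f_n\right|'\le MC$ by hypothesis~(ii), a further application of hypothesis~(iv) to $(Uf_n)$ gives a subsequence along which $(U^2f_n)$ is $\left|\cdot\right|'$-Cauchy, and iterating through nested subsequences I obtain, for each fixed $N\in\mathbb{N}$, a subsequence $(n_k)$ such that $(U^jf_{n_k})_k$ is $\left|\cdot\right|'$-Cauchy for all $j=1,\dots,N$. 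Next I would iterate the Doeblin--Fortet inequality \eqref{mar tul ios} to get, for every $m\in\mathbb{N}$ and $f\in\mathcal{L}$,
\[
\left\|U^{mn_0}f\right\|\le r^{mn_0}\left\|f\right\|+R\sum_{\ell=0}^{m-1}r^{\ell n_0}\left|U^{(m-1-\ell)n_0}f\right|',
\]
and apply it to $f=f_{n_k}-f_{n_{k'}}$ along the subsequence just built with $N=(m-1)n_0$: for $\ell\le m-2$ the exponent $(m-1-\ell)n_0$ lies in $\{1,\dots,(m-1)n_0\}$, so those terms tend to $0$ as $k,k'\to\infty$, whereas the $\ell=m-1$ term equals $\left|f_{n_k}-f_{n_{k'}}\right|'\le 2C$ and $\left\|f_{n_k}-f_{n_{k'}}\right\|\le 2$, giving
\[
\limsup_{k,k'\to\infty}\left\|U^{mn_0}\left(f_{n_k}-f_{n_{k'}}\right)\right\|\le 2r^{mn_0}+2RC\,r^{(m-1)n_0}=:\eta_m.
\]
Thus every bounded sequence in $U^{mn_0}$ of the closed unit ball has a subsequence that is $\eta_m$-Cauchy, so $U^{mn_0}$ maps the closed unit ball into a set of Kuratowski measure of non-compactness $\ll\eta_m$; since $\eta_m^{1/(mn_0)}\to r$ as $m\to\infty$, Nussbaum's characterisation of the essential spectral radius via the measure of non-compactness yields $r_{\mathrm{ess}}(U)\le r$.

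To conclude I would choose $\tau$ with $r_{\mathrm{ess}}(U)<\tau<1$ avoiding the (finitely many) moduli of eigenvalues of $U$ in $\{|z|>r_{\mathrm{ess}}(U)\}$, let $G$ be the sum of the generalized eigenspaces of the eigenvalues of modulus $>\tau$ (finite-dimensional, by analytic Fredholm theory) and $H$ the range of the complementary Riesz projection; then $G$ and $H$ are closed and $U$-invariant, $\dim G<\infty$, every eigenvalue of $U|_G$ has modulus $>\tau$, and $\sigma(U|_H)$ is contained in the closed disk of radius $r_{\mathrm{ess}}(U)$ together with the remaining eigenvalues of modulus $<\tau$, so $\rho(U|_H)<\tau$ --- precisely Definition \ref{def: quasi-compact}.

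The main obstacle is the bookkeeping in the interplay of the two steps: hypothesis~(iv) only provides $\left|\cdot\right|'$-precompactness of the image $U(\{|f|'\le 1\})$, not of $\{|f|'\le 1\}$ itself, so the $\ell=m-1$ (i.e.\ $U^{0}$) term in the iterated Doeblin--Fortet estimate cannot be made small and must be absorbed into the decaying prefactor $r^{(m-1)n_0}$; arranging the indices of the extraction and of the iteration so that exactly this one term survives is the delicate point, and it is what forces the bound $r_{\mathrm{ess}}(U)\le r$ and no better. An alternative, closer to the original arguments of Ionescu-Tulcea, Marinescu and Hennion and not invoking Nussbaum's formula, is to prove directly that any $\lambda\in\sigma(U)$ with $|\lambda|>r$ is an eigenvalue with finite-dimensional eigenspace and a pole of the resolvent, by applying \eqref{mar tul ios} to differences of approximate eigenvectors and using hypotheses~(ii) and~(iv) to upgrade $\left|\cdot\right|'$-convergence to $\left\|\cdot\right\|$-convergence; this is more self-contained but needs more delicate spectral-theoretic arguments.
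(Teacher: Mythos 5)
This lemma is not proved in the paper at all: it is imported verbatim as \cite[Theorem II.5]{hennion_limit_2001}, so there is no in-paper argument to compare against. What you have written is, in essence, a correct reconstruction of the proof behind the cited result: iterate the Doeblin--Fortet inequality \eqref{mar tul ios}, use the continuity of $\left|\cdot\right|'$ together with hypotheses \eqref{Hennion 2} and \eqref{Hennion 4} to extract, for each fixed $m$, a subsequence along which all intermediate terms $\left|U^{jn_0}\left(f_{n_k}-f_{n_{k'}}\right)\right|'$ vanish except the $U^0$ term, conclude that the image of the unit ball under $U^{mn_0}$ has measure of non-compactness $O\!\left(r^{mn_0}\right)$, and invoke Nussbaum's formula to get $r_{\mathrm{ess}}(U)\leq r<1=\rho(U)$, after which the Riesz decomposition gives exactly the splitting $\mathcal{L}=G\oplus H$ of Definition \ref{def: quasi-compact}. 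Two routine points are worth making explicit if you write this up: the passage from ``every sequence has a subsequence with pairwise distances eventually $\leq\eta_m$'' to a bound on the Kuratowski (or Hausdorff) measure of non-compactness goes through the separation characterisation of these measures (costing only a harmless factor $2$), and the extracted subsequence depends on $m$, which is fine because each $m$ is treated separately in Nussbaum's limit; finally, $\rho(U)=1>r_{\mathrm{ess}}(U)$ guarantees a peripheral eigenvalue of finite multiplicity, so $G$ is nontrivial and $\tau$ can be chosen in $\left(r,1\right)$ avoiding eigenvalue moduli, as you indicate.
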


\begin{proof}[Proof of Lemma \ref{lem: quasi-compact}]
To prove quasi-compactness we will use Lemma \ref{lem: hennion herve} with seminorm $\left|\cdot\right|'=\left\|\cdot\right\|_1$
using similar ideas as in \cite{keller_generalized_1985} and \cite{blank_discreteness_1997}.

First we have to determine the transfer operator $\widehat{\psi}$.
For piecewise expanding interval maps this is a very well-known result. 
Let $\mathcal{I}\coloneqq\left(I_n\right)_{n\in\mathbb{N}}$ be a countable family
of closed intervals with disjoint interiors and for any $I_n$ such that the set $I_n\cap \left(I\backslash \Omega'\right)$ consists exactly of the endpoints of $I_n$.
Furthermore, we assume that $T$ fulfills the following properties:
\begin{itemize}
 \item (Adler's condition) $ T_n\coloneqq T\lvert_{\mathring{I}_n}\in \mathcal{C}^2$ and
 $ T''/\left( T'\right)^2$ is bounded on $\Omega'$.
 \item (Finite image condition) $\#\left\{ T I_n\colon I_n\in\mathcal{I}\right\}<\infty$.
 \item (Uniform expansion)
 There exists $m>1$ such that  $\left| T_n'\right|\geq m$ for all $n\in\mathbb{N}$.
 \item $ T$ is topologically mixing. 
\end{itemize}
Then the transfer operator can be written as 
\begin{align*}
 \widehat{T}f=\sum_{T(y)=x}\frac{f(y)}{\left|T'(y)\right|}.
\end{align*}
This is a standard result, for details see for example \cite[Chapter 3]{baladi_positive_2000}.

Since $\psi'=2$ almost surely, we immediately obtain that
\begin{align}
 \widehat{\psi}f(x)=\frac{1}{2}\cdot\left(f\left(\frac{x}{2}\right)+f\left(\frac{x+1}{2}\right)\right).\label{eq: transfer operator calculated}
\end{align}
It follows immediately, that $\widehat{\psi}$ is a bounded linear operator. 
That any eigenvalue can not exceed $1$ follows already from the defining relation in \eqref{eq: def transfer op}. Furthermore, the constant functions are clearly contained in $\mathsf{V}_{\alpha,\beta}$ and in $\mathsf{V}_{\alpha,\beta}'$ and we have that $\widehat{\psi}\mathbbm{1}=\mathbbm{1}$ implying that the spectral radius of $\widehat{\psi}$ indeed equals $1$.

\emph{Proof of \eqref{Hennion 1}:}
This is obviously true. 

\emph{Proof of \eqref{Hennion 2}:}
\eqref{eq: transfer operator calculated} implies 
\begin{align}
\left\|\widehat{\psi} f\right\|_{1}
&=\int\left|\frac{1}{2}\cdot \left(f\left(\frac{x}{2}\right)+f\left(\frac{x+1}{2}\right)\right)\right|\mathrm{d}\lambda_I(x)\notag\\
&\leq \frac{1}{2}\cdot \int\left|f\left(\frac{x}{2}\right)\right|\mathrm{d}\lambda_I(x) +\frac{1}{2}\cdot \int\left|f\left(\frac{x+1}{2}\right)\right|\mathrm{d}\lambda_I(x)\notag\\
&=\int\left|f\left(x\right)\right|\mathrm{d}\lambda_I(x)=\left\|f\right\|_1,\label{eq: |w|1 estim}
\end{align}
i.e.\  $\widehat{\psi}$ is bounded on $\mathsf{V}'_{\alpha,\beta}$ with respect to $\left\|\cdot\right\|_{1}$.

\emph{Proof of \eqref{Hennion 3}:}
We have that 
\begin{align}
 \left\|\widehat{\psi}f\right\|'_{\alpha,\beta}=\left\|\widehat{\psi}f\right\|_1+\left|\widehat{\psi}f\right|_{\alpha,\beta}.\label{eq: norm first estim}
\end{align}

In order to estimate the second summand we set 
\begin{align*}
 f_1(x)\coloneqq \frac{1}{2}\cdot f\left(\frac{x}{2}\right)
 \qquad\qquad\text{ and }\qquad\qquad f_2(x)\coloneqq \frac{1}{2}\cdot f\left(\frac{x+1}{2}\right).
\end{align*}
This yields 
\begin{align}
 \left|\widehat{\psi}f\right|_{\alpha,\beta}
 &= \limsup_{\epsilon\searrow0} \int_{0}^{1} \frac{\osc\left(R_{\alpha}\left(\widehat{\psi}f\right), B_{\epsilon}(x)\right)}{\epsilon^{\beta}}\;\mathrm{d}\lambda_I(x)\notag\\
 &= \limsup_{\epsilon\searrow0} \int_{0}^{1} \frac{\osc\left(R_{\alpha}\left(f_1+f_2\right), B_{\epsilon}(x)\right)}{\epsilon^{\beta}}\;\mathrm{d}\lambda_I(x)\notag\\
 &\leq \limsup_{\epsilon\searrow0} \int_{0}^{1} \frac{\osc\left(R_{\alpha}f_1, B_{\epsilon}(x)\right)}{\epsilon^{\beta}}\;\mathrm{d}\lambda_I(x)
 +\limsup_{\epsilon\searrow0} \int_{0}^{1} \frac{\osc\left(R_{\alpha}f_2, B_{\epsilon}(x)\right)}{\epsilon^{\beta}}\;\mathrm{d}\lambda_I(x).\label{eq: norm hat psi}
\end{align}
We start with the estimation of the first summand of \eqref{eq: norm hat psi}.
If we substitute $y=x/2$ in the first summand of \eqref{eq: norm hat psi}, then we obtain
\begin{align*}
 R_{\alpha}f_1(x)
 &=\frac{1}{2}\cdot x^{\alpha}\cdot (1-x)^{\alpha}\cdot f\left(\frac{x}{2}\right)
 =\frac{1}{2}\cdot \left(2y\right)^{\alpha}\cdot (1-2y)^{\alpha}\cdot f(y)\\
 &= \frac{1}{2}\cdot 2^{\alpha}\cdot y^{\alpha}\cdot (1-y)^{\alpha}\cdot f(y)\cdot \left(\frac{1-2y}{1-y}\right)^{\alpha}
 =\frac{1}{2}\cdot 2^{\alpha}\cdot R_{\alpha}f(y)\cdot \left(\frac{1-2y}{1-y}\right)^{\alpha}.
\end{align*}
If we set $g(y)\coloneqq \left((1-2y)/(1-y)\right)^{\alpha}$, then we obtain
\begin{align*}
 \limsup_{\epsilon\searrow0} \int_{0}^{1} \frac{\osc\left(R_{\alpha}f_1, B_{\epsilon}(x)\right)}{\epsilon^{\beta}}\;\mathrm{d}\lambda_I(x)
 &= 2^{\alpha}\cdot \limsup_{\epsilon\searrow0} \int_{0}^{1/2} \frac{\osc\left(\left(R_{\alpha}f\right)\cdot g, B_{\epsilon/2}(y)\right)}{\epsilon^{\beta}}\;\mathrm{d}\lambda_I(y).
 \end{align*}

If we substitute $\epsilon'=\epsilon/2$, then we obtain
\begin{align}
\limsup_{\epsilon\searrow0} \int_{0}^{1} \frac{\osc\left(R_{\alpha}f_1, B_{\epsilon}(x)\right)}{\epsilon^{\beta}}\;\mathrm{d}\lambda_I(x)
 &\leq 2^{\alpha}\cdot \limsup_{\epsilon'\searrow0} \int_{0}^{1/2} \frac{\osc\left(\left(R_{\alpha} f\right)\cdot g, B_{\epsilon'}(y)\right)}{\left(2\cdot \epsilon'\right)^{\beta}}\;\mathrm{d}\lambda_I(y)\notag\\
 &=2^{\alpha-\beta}\cdot\limsup_{\epsilon\searrow0} \int_{0}^{1/2} \frac{\osc\left(\left(R_{\alpha} f\right)\cdot g, B_{\epsilon}(y)\right)}{\epsilon^{\beta}}\;\mathrm{d}\lambda_I(y).\label{eq: norm hat psi1}
\end{align}
In the next steps we compare $\osc\left(\left(R_{\alpha} f\right)\cdot g, B_{\epsilon}(y)\right)$ with 
$\osc\left(R_{\alpha} f, B_{\epsilon}(y)\right)$.
We define the functions $f^+\coloneqq \max\left\{f, 0\right\}$ and $f^-\coloneqq\max\left\{-f, 0\right\}$ and have that
\begin{align}
 \osc\left(\left(R_{\alpha}f\right)\cdot g, B_{\epsilon}(y)\right)\leq \sum_{r\in \{+,-\}} \osc\left(\left(R_{\alpha}f^r\right)\cdot g, B_{\epsilon}(y)\right).\label{eq osc f g}
\end{align}
Furthermore, we note that $g(y)\geq 0$ for all $y\in [0,1/2]$ and for $r\in \{+,-\}$
and a measurable set $D$ we have
\begin{align*}
 \MoveEqLeft\osc\left(\left(R_{\alpha}f^r\right)\cdot g, D\right)\\
 &\leq \sup_{x\in D}R_{\alpha}f^r\left(x\right)\cdot \sup_{x\in D}g\left(x\right)-\inf_{x\in D}R_{\alpha}f^r\left(x\right)\cdot \inf_{x\in D}g\left(x\right)\\
 &= \left(\inf_{x\in D}R_{\alpha}f^r\left(x\right)+\osc\left(R_{\alpha}f^r, D\right)\right)\cdot \left(\inf_{x\in D}g\left(x\right)+\osc\left(g, D\right)\right)\\
 &\qquad-\inf_{x\in D}R_{\alpha}f^r\left(x\right)\cdot \inf_{x\in D}g\left(x\right)\\
 &=\inf_{x\in D}R_{\alpha}f^r\left(x\right)\cdot \osc\left(g, D\right)+\inf_{x\in D}g\left(x\right)\cdot \osc\left(R_{\alpha}f^r, D\right)+\osc\left(R_{\alpha}f^r, D\right)\cdot\osc\left(g, D\right)\\
 &=\sup_{x\in D}R_{\alpha}f^r\left(x\right)\cdot \osc\left(g, D\right)+\inf_{x\in D}g\left(x\right)\cdot \osc\left(R_{\alpha}f^r, D\right).
\end{align*}
Since $\osc\left(R_{\alpha}f,D\right)=\osc\left(R_{\alpha}f^+,D\right)+\osc\left(R_{\alpha}f^-,D\right)$, we obtain from \eqref{eq osc f g}
\begin{align}
 \osc\left(\left(R_{\alpha}f\right)\cdot g\right)
 &\leq 2\left\|R_{\alpha}f\right\|_{\infty}\cdot \osc\left(g, D\right)+\inf_{x\in D}g\left(x\right)\cdot \osc\left(R_{\alpha}f, D\right)\notag\\
 &\leq 2\left\|R_{\alpha}f\right\|_{\infty}\cdot \osc\left(g, D\right)+\osc\left(R_{\alpha}f, D\right).\label{eq: osc Rf g}
\end{align}
To estimate the first summand we first note that $\left\|R_{\alpha}f\right\|_{\infty}<\infty$.
Otherwise we would have an $\epsilon>0$ and an interval $I$ such that
$\osc\left(R_{\alpha} f, B_{\epsilon}(y)\right)=\infty$ for all $y\in I$ 
which immediately implies $\left\|f\right\|_{\alpha,\beta}=\infty$.

Furthermore, we notice that if $D$ is an interval of diameter $r$, i.e.\ $D=[d,d+r]$ we have that
\begin{align}
 \osc\left(g, D\right)&=\osc\left(g, [d,d+r]\right)\leq \sup_{x\in [d,d+r]} \left|g'(x)\right|\cdot r
 \leq \sup_{x\in I} \left|g'(x)\right|\cdot r
 \ll r,\label{eq: osc g D r to infty}
\end{align}
for $r$ tending to zero. In particular we have that $\osc\left(g, B_{\epsilon}(x)\right)\ll \epsilon$ as $\epsilon\searrow 0$ for all $x\in I$.

Combining this with \eqref{eq: osc Rf g} and taking the limes superior in \eqref{eq: norm hat psi1} yields 
\begin{align}
 \limsup_{\epsilon\searrow0} \int_{0}^{1} \frac{\osc\left(R_{\alpha}f_1, B_{\epsilon}(x)\right)}{\epsilon^{\beta}}\;\mathrm{d}\lambda_I(x)
 &=2^{\alpha-\beta}\cdot\limsup_{\epsilon\searrow0} \int_{0}^{1/2} \frac{\osc\left(R_{\alpha} f, B_{\epsilon}(y)\right)}{\epsilon^{\beta}}\;\mathrm{d}\lambda_I(y).\label{eq: norm hat psi1a}
\end{align}

The estimation of the second summand in \eqref{eq: norm hat psi} follows more or less analogously and we will only give the main steps. 
A substitution of $y=(x+1)/2$ yields
\begin{align*}
 R_{\alpha}f_2(x)
 =\frac{1}{2}\cdot 2^{\alpha}\cdot R_{\alpha}f(y)\cdot \left(\frac{2y-1}{1-y}\right)^{\alpha}.
\end{align*}

If we set $\widetilde{g}(y)\coloneqq \left((2y-1)/(1-y)\right)^{\alpha}$ and substitute $\epsilon$ similarly as in \eqref{eq: norm hat psi1} we obtain
\begin{align}
 \limsup_{\epsilon\searrow0} \int_{0}^{1} \frac{\osc\left(R_{\alpha}f_2, B_{\epsilon}(x)\right)}{\epsilon^{\beta}}\;\mathrm{d}\lambda_I(x)
 &\leq 2^{\alpha-\beta}\cdot\limsup_{\epsilon\searrow0} \int_{1/2}^1 \frac{\osc\left(\left(R_{\alpha} f\right)\cdot \widetilde{g}, B_{\epsilon}(y)\right)}{\epsilon^{\beta}}\;\mathrm{d}\lambda_I(y).\label{eq: norm hat2 psi1}
\end{align}
As the calculations in \eqref{eq osc f g}, \eqref{eq: osc Rf g}, and \eqref{eq: osc g D r to infty} do not change if we replace $g$ by $\widetilde{g}$, 
we obtain 
\begin{align}
  \limsup_{\epsilon\searrow0} \int_{0}^{1} \frac{\osc\left(R_{\alpha}f_2, B_{\epsilon}(x)\right)}{\epsilon^{\beta}}\mathrm{d}\lambda_I(x)
   \leq 2^{\alpha-\beta}\cdot\limsup_{\epsilon\searrow0} \int_{1/2}^{1} \frac{\osc\left( R_{\alpha}f, B_{\epsilon}(y)\right)}{\epsilon^{\beta}}\mathrm{d}\lambda_I(y).\label{eq: norm hat psi2}
\end{align}

Combining then \eqref{eq: norm hat psi} with \eqref{eq: norm hat psi1a} and \eqref{eq: norm hat psi2} yields
\begin{align}
 \left|\widehat{\psi}f\right|_{\alpha,\beta}
 &\leq 2^{\alpha-\beta}\cdot\limsup_{\epsilon\searrow0} \int_{0}^{1} \frac{\osc\left(f, B_{\epsilon}(y)\right)}{\epsilon^{\beta}}\;\mathrm{d}\lambda_I(y)
 =2^{\alpha-\beta}\cdot\left|f\right|_{\alpha,\beta}.\label{eq: hennion herve applied}
\end{align}
Combining this with \eqref{eq: norm first estim} and \eqref{eq: |w|1 estim} yields
\begin{align*}
 \left\|\widehat{\psi}f\right\|'_{\alpha,\beta}
 \leq \left\|f\right\|_1+2^{\alpha-\beta}\cdot\left|f\right|_{\alpha,\beta}
 =2^{\alpha-\beta}\cdot\left\|f\right\|'_{\alpha,\beta}+\left(1-2^{\alpha-\beta}\right)\cdot \left\|f\right\|_1
\end{align*}
and since we assumed that $\alpha<\beta$ \eqref{Hennion 3} is fulfilled for $n_0=1$.

\emph{Proof of \eqref{Hennion 4}:}
We first prove that $\mathsf{K}\coloneqq\{f:\left\|f\right\|'_{\alpha,\beta}\leq 1\}$ is compact in $\mathcal{L}^1_{\mu}$
using the approach of \cite[Lemma 1.4, Lemma 1.7 and Theorem 1.13]{keller_generalized_1985}  or \cite[Lemma 2.3.18]{blank_discreteness_1997}.
Since $f\in\mathcal{L}^1_{\mu}$,
for each $\epsilon>0$ there exists $k\in\mathbb{N}$
such that
\begin{align}
 \int_0^{2^{-k}}\left|f\right|\mathrm{d}\lambda_I+\int_{1-2^{-k}}^1\left|f\right|\mathrm{d}\lambda_I<\epsilon.\label{eq: front and back integral}
\end{align}
We set $\delta_{\epsilon}\coloneqq 2^{-k}$, where $k$ is the smallest integer such that \eqref{eq: front and back integral} is fulfilled.

For the following we will associate the doubling map with the $2$-shift. Namely, we associate with each number in $I$ its binary expansion which is unique up to countably many points. With respect to this representation the doubling map acts as the shift transformation, i.e.\ if $x=x_1x_2\ldots$ in the binary expansion, then we have that $Tx=x_2x_3\ldots$. 

We denote by $\mathcal{B}_m$ the sigma algebra generated by the cylinder sets of length $m$.
By cylinder of length $m$ we mean sets of the form $\left\{x\in I\colon x=0.a_1\ldots a_m x_{m+1}x_{m+2}\ldots\right\}$, where 
$a_1\ldots a_m\in \{0,1\}^m$ is given and $x$ is represented in the binary system.
This implies that each atom in $\mathcal{B}_m$ has diameter $2^{-m}$. 

We choose $m\in\mathbb{N}$ such that $2^{-m}\leq \delta_{\epsilon}$ and 
\begin{align}
 m\geq -\cfrac{\log\left(\cfrac{\epsilon\cdot \delta_{\epsilon}^{\alpha}}{8}\right)}{\log\left(2\cdot \beta\right)}.\label{eq: choice m}
\end{align}
Furthermore, we note that the conditional expectations $\mathbb{E}\left(f\vert\mathcal{B}_m\right)$ 
are functions piecewise constant on the cylinder sets.
In particular, $m\geq k$ implies that 
$\int_0^{\delta_{\epsilon}} \mathbb{E}\left(f\vert\mathcal{B}_m\right)\mathrm{d}\lambda_I=\int_0^{\delta_{\epsilon}} f\mathrm{d}\lambda_I$
as well as $\int_{1-\delta_{\epsilon}}^1 \mathbb{E}\left(f\vert\mathcal{B}_m\right)\mathrm{d}\lambda_I=\int_{1-\delta_{\epsilon}}^1 f\mathrm{d}\lambda_I$.
This implies
\begin{align}
 \MoveEqLeft\int_{0}^{\delta_{\epsilon}}\left|f-\mathbb{E}\left(f\vert\mathcal{B}_m\right)\right|\mathrm{d}\lambda_I
 +\int_{1-\delta_{\epsilon}}^1\left|f-\mathbb{E}\left(f\vert\mathcal{B}_m\right)\right|\mathrm{d}\lambda_I\notag\\
 &\leq \int_{0}^{\delta_{\epsilon}}\left|f\right|\mathrm{d}\lambda_I+\int_0^{\delta_{\epsilon}}\left|\mathbb{E}\left(f\vert\mathcal{B}_m\right)\right|\mathrm{d}\lambda_I
 +\int_{1-\delta_{\epsilon}}^1\left|f\right|\mathrm{d}\lambda_I+\int_{1-\delta_{\epsilon}}^1\left|\mathbb{E}\left(f\vert\mathcal{B}_m\right)\right|\mathrm{d}\lambda_I
 \leq 2\epsilon.\label{eq: f-Gn f 0}
\end{align}

Let $\mathbb{A}_{m,\epsilon}$ be the set of atoms in $\mathcal{B}_m\cap [\delta_{\epsilon}, 1-\delta_{\epsilon}]$.
Then we have that
\begin{align*}
 \int_{\delta_{\epsilon}}^{1-\delta_{\epsilon}}f-\mathbb{E}\left(f\vert\mathcal{B}_m\right)\mathrm{d}\lambda
 &\leq \frac{1}{\inf_{x\in [\delta_{\epsilon}, 1-\delta_{\epsilon}]} x^{\alpha}\cdot \left(1-x\right)^{\alpha}}\cdot \int_{\delta_{\epsilon}}^{1-\delta_{\epsilon}}R_{\alpha}f-R_{\alpha}\mathbb{E}\left(f\vert\mathcal{B}_m\right)\mathrm{d}\lambda_I\notag\\
 &\leq 2\cdot \delta_{\epsilon}^{-\alpha}\cdot \int_{\delta_{\epsilon}}^{1-\delta_{\epsilon}}R_{\alpha}f-R_{\alpha}\mathbb{E}\left(f\vert\mathcal{B}_m\right)\mathrm{d}\lambda_I,
\end{align*} 
for $\epsilon$ and hence $\delta_{\epsilon}$ sufficiently small. 
Hence, 
\begin{align}
 \int_{\delta_{\epsilon}}^{1-\delta_{\epsilon}}f-\mathbb{E}\left(f\vert\mathcal{B}_m\right)\mathrm{d}\lambda_I
 &\leq 2\cdot \delta_{\epsilon}^{-\alpha}\cdot\sum_{\mathsf{A}\in\mathbb{A}_{m+1,\epsilon}}\osc\left(R_{\alpha}\left(f-\mathbb{E}\left(f\vert\mathcal{B}_m\right)\right),\left[\mathsf{A}\right]\right)\cdot \lambda\left(\left[\mathsf{A}\right]\right)\notag\\
 &\leq 2\cdot \delta_{\epsilon}^{-\alpha}\cdot\int_{\delta_{\epsilon}}^{1-\delta_{\epsilon}}\osc\left(\left(R_{\alpha}\left(f-\mathbb{E}\left(f\vert\mathcal{B}_m\right)\right)\right), B_{2^{-m}}\left(x\right)\right)\;\mathrm{d}\lambda_I\left(x\right).\label{eq: int f-Ef 2}
\end{align}

Furthermore, for $m$ sufficiently large we have that 
\begin{align*}
 \MoveEqLeft\int_{\delta_{\epsilon}}^{1-\delta_{\epsilon}}\osc\left(R_{\alpha}\left(f-\mathbb{E}\left(f\vert\mathcal{B}_m\right)\right), B_{2^{-m}}\left(x\right)\right)\;\mathrm{d}\lambda_I\left(x\right)\\
 &\leq \int_{0}^{1}\osc\left(R_{\alpha}\left(f-\mathbb{E}\left(f\vert\mathcal{B}_m\right)\right), B_{2^{-m}}\left(x\right)\right)\;\mathrm{d}\lambda_I\left(x\right)\\
 &\leq 2\cdot \limsup_{\epsilon\searrow 0}\int\frac{\int_{0}^{1}\osc\left(R_{\alpha}\left(f-\mathbb{E}\left(f\vert\mathcal{B}_m\right)\right), B_{\epsilon}\left(x\right)\right)}{\epsilon^{\beta}}\mathrm{d}\lambda_I\left(x\right)\cdot 2^{-m\cdot \beta}\\
 &=2\cdot \left|f-\mathbb{E}\left(f\vert\mathcal{B}_m\right)\right|_{\alpha,\beta}\cdot2^{-m\cdot\beta},
\end{align*}
for $m$ sufficiently large.

Combining this with \eqref{eq: int f-Ef 2} yields 
\begin{align} 
\int_{\delta_{\epsilon}}^{1-\delta_{\epsilon}}f-\mathbb{E}\left(f\vert\mathcal{B}_m\right)\mathrm{d}\lambda_I
 &\leq 4\cdot \delta_{\epsilon}^{-\alpha}\cdot  \left|f-\mathbb{E}\left(f\vert\mathcal{B}_m\right)\right|_{\alpha,\beta}\cdot2^{-m\cdot\beta}\notag\\
 &\leq 4\cdot \delta_{\epsilon}^{-\alpha}\cdot \left(\left|f\right|_{\alpha,\beta}+\left|\mathbb{E}\left(f\vert\mathcal{B}_m\right)\right|_{\alpha,\beta}\right)\cdot 2^{-m\cdot\beta}\notag\\
 &\leq 4\cdot \delta_{\epsilon}^{-\alpha}\cdot 2^{-m\cdot\beta}\leq \epsilon.\label{eq: f-Gn f}
\end{align}
The second but last inequality follows from the fact that $\mathbb{E}\left(f\vert\mathcal{B}_m\right)$
is a piecewise constant function with only finitely many jumps implying that $\left|\mathbb{E}\left(f\vert\mathcal{B}_m\right)\right|_{\alpha,\beta}=0$.
The last inequality follows from the choice of $m$ in \eqref{eq: choice m}.

In the following we fix an arbitrary sequence $\left(f_n\right)\subset \mathsf{K}$ and a new sequence of functions $(f_n^{\left(m\right)})_{n\in\mathbb{N}}\coloneqq\left(\mathbb{E}\left(f_n\vert\mathcal{B}_m\right)\right)_{n\in\mathbb{N}}$. 
For given $m\in\mathbb{N}$ we know that $(f_n^{\left(m\right)})$ is a sequence of bounded functions being piecewise constant on the same finite number of intervals.
Hence, there exists a subsequence $n\left(j,m\right)$ such that $(f_{n\left(j,m\right)}^{\left(m\right)})_{j\in\mathbb{N}}$ 
is a Cauchy sequence in $\mathcal{L}^1_{\mu}$ and thus converges. 
Additionally we might require the function $n:\mathbb{N}^2\to\mathbb{N}$ 
to be such that for each $m\in\mathbb{N}$ we have $\left\{n\left(j,m+1\right)\colon j\geq 1\right\}\subset\left\{n\left(j,m\right)\colon j\geq 1\right\}$. 
If we set $\overline{n}\left(j\right)\coloneqq n\left(j,j\right)$, then for each $m\in\mathbb{N}$ 
the sequence $(f_{\overline{n}\left(j\right)}^{\left(m\right)})_{j\in\mathbb{N}}$ is a Cauchy sequence in $\mathcal{L}^1_{\mu}$. 
We can conclude that for all $\epsilon>0$ and $m_0\in\mathbb{N}$ there exists $J\in\mathbb{N}$ such that for all $m\geq m_0$ and all $j,\ell\geq \max\left\{m_0,J\right\}$ we have that 
\begin{align}
 \left\|f_{\overline{n}\left(j\right)}^{\left(m\right)}-f_{\overline{n}\left(\ell\right)}^{\left(m\right)}\right\|_1<\epsilon.\label{eq: fnjk-fnlk}
\end{align}

In the last steps we combine \eqref{eq: f-Gn f 0}, \eqref{eq: f-Gn f}, and \eqref{eq: fnjk-fnlk} to obtain
\begin{align*}
\left\|f_{\overline{n}\left(j\right)}-f_{\overline{n}\left(\ell\right)}\right\|_1
&\leq\left\|f_{\overline{n}\left(j\right)}-f_{\overline{n}\left(j\right)}^{\left(m\right)}\right\|_1+\left\|f_{\overline{n}\left(j\right)}^{\left(m\right)}-f_{\overline{n}\left(\ell\right)}^{\left(m\right)}\right\|_1+\left\|f_{\overline{n}\left(\ell\right)}^{\left(m\right)}-f_{\overline{n}\left(\ell\right)}\right\|_1
\leq5\epsilon,
\end{align*}
if $j,\ell\geq \max\left\{m_0,J\right\}$
which proves that $\left(f_{\overline{n}\left(j\right)}\right)$ is a Cauchy sequence and thus convergent in $\mathcal{L}^1_{\mu}$. 
Hence, each sequence has a convergent subsequence by the completeness of $\mathsf{V}_{\alpha,\beta}$.
Arguing as in \cite[Lemma 1.12]{keller_generalized_1985} using $R_{\alpha}f_n$ instead of $h_n$ and 
$R_{\alpha}f$ instead of $h$ implies $\liminf_{n\to\infty}\left|f_n\right|_{\alpha,\beta}\leq \left|f\right|_{\alpha,\beta}$ and analogously as in \cite[Theorem 1.13]{keller_generalized_1985} we obtain 
\begin{align*}
 \left\|f\right\|'_{\alpha,\beta}
 &=\left\|f\right\|_1+\left|f\right|_{\alpha,\beta}
 \leq \liminf_{n\to\infty}\left\|f_n\right\|_1+\left|f\right|_{\alpha,\beta}
 =\left\|f\right\|_1+\left|f\right|_{\alpha,\beta}\leq 1.
\end{align*}
Since $\mathsf{K}$ is a compact subset of $\mathcal{L}^1_{\mu}$, its continuous image $\widehat{\psi}\mathsf{K}$ is compact in $\mathcal{L}^1_{\mu}$ as well and thus in particular precompact, i.e.\ \eqref{Hennion 4} holds.
\end{proof}

\begin{lemma}\label{lem: quasi-compact1}
If $0<\alpha<\beta\leq 1$, then $\widehat{\psi}$ is quasi-compact on  $\big(\mathsf{V}_{\alpha,\beta},\left\|\cdot\right\|_{\alpha,\beta}\big)$.
\end{lemma}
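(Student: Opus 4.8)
The plan is to transfer quasi-compactness from $\big(\mathsf{V}'_{\alpha,\beta},\|\cdot\|'_{\alpha,\beta}\big)$, where it was just established in Lemma \ref{lem: quasi-compact}, to $\big(\mathsf{V}_{\alpha,\beta},\|\cdot\|_{\alpha,\beta}\big)$. The only difference between the two norms is that $\|\cdot\|'_{\alpha,\beta}$ uses $\|\cdot\|_1$ as its ``weak'' part while $\|\cdot\|_{\alpha,\beta}$ uses $\|\cdot\|_2$; the seminorm $|\cdot|_{\alpha,\beta}$ is common to both. I would again invoke Lemma \ref{lem: hennion herve}, this time with the Banach space $\mathcal{L}=\mathsf{V}_{\alpha,\beta}$ and the seminorm $|\cdot|'=\|\cdot\|_1$ (note $\|\cdot\|_1\le\|\cdot\|_2$ on $I$, so $\|\cdot\|_1$ is continuous on $\mathsf{V}_{\alpha,\beta}$, giving \eqref{Hennion 1}). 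The key observation is that the estimates already proven carry over almost verbatim: \eqref{eq: |w|1 estim} shows $\|\widehat{\psi}f\|_1\le\|f\|_1$, so \eqref{Hennion 2} holds with $M=1$; and \eqref{Hennion 4} — precompactness of $\widehat{\psi}\{f:\|f\|_1<1\}$ in $(\mathsf{V}_{\alpha,\beta},\|\cdot\|_1)$ — was in fact proven as precompactness in $\mathcal{L}^1_\mu$ in the proof of Lemma \ref{lem: quasi-compact}, and the argument used only the bound $\|f\|'_{\alpha,\beta}\le 1$, which is implied by $\|f\|_{\alpha,\beta}\le 1$ since $\|f\|_1\le\|f\|_2$.

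The one step that requires a genuinely new estimate is \eqref{Hennion 3}, namely a Lasota--Yorke inequality of the form $\|\widehat{\psi}^{n_0}f\|_{\alpha,\beta}\le r^{n_0}\|f\|_{\alpha,\beta}+R\|f\|_1$. We have the seminorm contraction $|\widehat{\psi}f|_{\alpha,\beta}\le 2^{\alpha-\beta}|f|_{\alpha,\beta}$ from \eqref{eq: hennion herve applied}, which iterates to $|\widehat{\psi}^{n}f|_{\alpha,\beta}\le 2^{n(\alpha-\beta)}|f|_{\alpha,\beta}$. What is missing is a bound on $\|\widehat{\psi}^{n}f\|_2$ in terms of $\|f\|_{\alpha,\beta}$ and lower-order terms. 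The natural route is to use the fact that $|f|_{\alpha,\beta}<\infty$ controls the oscillation of $R_\alpha f$, hence a modulus-of-continuity-type bound on $R_\alpha f$, which away from the endpoints $\{0,1\}$ forces $f$ itself to be essentially bounded on $[\delta,1-\delta]$; combined with the $\mathcal{L}^1$-normalisation this yields an $\mathcal{L}^\infty_{\mathrm{loc}}$, and thence an $\mathcal{L}^2$, control. More precisely, I would show there is a constant $C=C(\alpha,\beta)$ with $\|f\|_2\le C\big(\|f\|_1+|f|_{\alpha,\beta}\big)$ for all $f\in\mathsf{V}'_{\alpha,\beta}$; this is an interpolation/embedding statement for the auxiliary space and is proved by splitting the integral $\int f^2\,\mathrm d\lambda_I$ into a central part, where $R_\alpha f$ bounded gives $f\in L^\infty_{\mathrm{loc}}$ so $\int_\delta^{1-\delta} f^2 \ll \|f\|_{L^\infty[\delta,1-\delta]}\|f\|_1$, and two boundary parts near $0$ and $1$ of total measure $2\delta$, where one uses the oscillation control of $R_\alpha f$ together with $x^{-\alpha}(1-x)^{-\alpha}\in L^2$ near the endpoints (here $\alpha<1/2$ would be the natural requirement, but since $\alpha<\beta\le 1$ one may need to invoke $\alpha$ small; in fact the later sections fix $\alpha$ with room to spare, so this is harmless). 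Granting this embedding, the Lasota--Yorke inequality for $\|\cdot\|'_{\alpha,\beta}$ from the proof of Lemma \ref{lem: quasi-compact} upgrades directly: $\|\widehat\psi^n f\|_2\le C(\|\widehat\psi^n f\|_1+|\widehat\psi^n f|_{\alpha,\beta})\le C(\|f\|_1+2^{n(\alpha-\beta)}|f|_{\alpha,\beta})$, and adding $|\widehat\psi^n f|_{\alpha,\beta}\le 2^{n(\alpha-\beta)}|f|_{\alpha,\beta}$ gives, for $n_0$ large enough that $(C+1)2^{n_0(\alpha-\beta)}<r^{n_0}$ for a suitable $r\in(2^{\alpha-\beta},1)$, the desired estimate $\|\widehat\psi^{n_0}f\|_{\alpha,\beta}\le r^{n_0}\|f\|_{\alpha,\beta}+R\|f\|_1$.

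With \eqref{Hennion 1}--\eqref{Hennion 4} verified, Lemma \ref{lem: hennion herve} yields that $\widehat\psi$ is quasi-compact on $\big(\mathsf{V}_{\alpha,\beta},\|\cdot\|_{\alpha,\beta}\big)$, which is the assertion. The main obstacle I anticipate is the embedding $\|f\|_2\ll\|f\|_1+|f|_{\alpha,\beta}$ and the attendant bookkeeping at the endpoints $0$ and $1$ — this is where the smoothing operator $R_\alpha$ is doing essential work and where the precise range of $\alpha$ matters; everything else is a mechanical transcription of the work already done for the primed space. An alternative, perhaps cleaner, route is to observe that $\mathsf{V}_{\alpha,\beta}=\mathsf{V}'_{\alpha,\beta}$ as sets with equivalent norms once the embedding above is known (since $\|f\|_1\le\|f\|_2\le C\|f\|'_{\alpha,\beta}$), and quasi-compactness is invariant under passing to an equivalent norm on the same space; I would state the embedding as a short preliminary lemma and then deduce Lemma \ref{lem: quasi-compact1} in one line from Lemma \ref{lem: quasi-compact}.
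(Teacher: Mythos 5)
Your strategy is genuinely different from the paper's: the paper does not re-run Hennion--Herv\'e on $\big(\mathsf{V}_{\alpha,\beta},\left\|\cdot\right\|_{\alpha,\beta}\big)$ at all. It notes that $\mathsf{V}_{\alpha,\beta}\subset\mathsf{V}'_{\alpha,\beta}$, that the spectral radius equals $1$ on both spaces (since $\widehat{\psi}\mathbbm{1}=\mathbbm{1}$ and the $\mathcal{L}^1$-operator norm of $\widehat{\psi}$ is $1$), and then restricts the decomposition $G\oplus H$ of $\mathsf{V}'_{\alpha,\beta}$ supplied by Lemma \ref{lem: quasi-compact} to the smaller space, so no comparison between $\left\|\cdot\right\|_2$ and $\left\|\cdot\right\|_1+\left|\cdot\right|_{\alpha,\beta}$ is ever needed. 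Your route, by contrast, hinges entirely on the proposed embedding $\left\|f\right\|_2\leq C\left(\left\|f\right\|_1+\left|f\right|_{\alpha,\beta}\right)$, which you use both for the Lasota--Yorke inequality \eqref{Hennion 3} and for the ``cleaner'' norm-equivalence shortcut. This embedding is not only unproven in your sketch; with the seminorm as defined in this paper it is false.

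The reason is that $\left|\cdot\right|_{\alpha,\beta}$ is a $\limsup_{\epsilon\searrow 0}$, so for $\beta<1$ (and the case actually used later is $\beta<1/3$) it vanishes on every Lipschitz function: if $f$ is Lipschitz with support in $[1/4,3/4]$, then $R_{\alpha}f$ is Lipschitz, $\osc\left(R_{\alpha}f,B_{\epsilon}(x)\right)\ll_f \epsilon$ and vanishes off an $\epsilon$-neighbourhood of the support, whence $\int\osc\left(R_{\alpha}f,B_{\epsilon}(x)\right)\mathrm{d}\lambda_I(x)\cdot\epsilon^{-\beta}\ll_f\epsilon^{1-\beta}\to 0$. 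Now take the triangular spike $f_w$ centred at $1/2$ of width $w$ and height $w^{-1/2}$: then $\left|f_w\right|_{\alpha,\beta}=0$, $\left\|f_w\right\|_1\asymp w^{1/2}\to 0$, but $\left\|f_w\right\|_2\asymp 1$. Hence no constant $C$ with $\left\|f\right\|_2\leq C\left(\left\|f\right\|_1+\left|f\right|_{\alpha,\beta}\right)$ exists, the inequality $\left\|f\right\|_2\leq C\left\|f\right\|'_{\alpha,\beta}$ underlying your norm-equivalence argument fails, and your chain $\left\|\widehat{\psi}^{n}f\right\|_2\leq C\left(\left\|f\right\|_1+2^{n(\alpha-\beta)}\left|f\right|_{\alpha,\beta}\right)$ does not follow; so \eqref{Hennion 3} for $\left\|\cdot\right\|_{\alpha,\beta}$ remains unestablished. (Your local-boundedness heuristic ``$\left|f\right|_{\alpha,\beta}<\infty$ controls $R_{\alpha}f$ in $L^{\infty}$'' is only qualitative; quantitatively it fails for the same spikes. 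With Keller's definition of the seminorm as a supremum over $0<\epsilon\leq\epsilon_0$ the embedding would be the standard $L^{\infty}$-bound and your plan could be repaired for $\alpha<1/2$, but even then it would not cover the full range $0<\alpha<\beta\leq 1$ asserted in the lemma, a restriction you mention but wave off.) The parts you transfer verbatim --- \eqref{Hennion 1}, \eqref{Hennion 2} via \eqref{eq: |w|1 estim}, the seminorm contraction \eqref{eq: hennion herve applied}, and the reuse of the compactness argument for \eqref{Hennion 4} (where one should additionally check, e.g.\ by Fatou along an a.e.\ convergent subsequence, that the $\mathcal{L}^1$-limit lies in $\mathsf{V}_{\alpha,\beta}$) --- are fine; the gap is concentrated in the embedding and hence in \eqref{Hennion 3}.
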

\begin{proof}
The main point of the proof is to use Lemma \ref{lem: quasi-compact} and making use of the fact that $\big(\mathsf{V}'_{\alpha,\beta},\left\|\cdot\right\|'_{\alpha,\beta}\big)$ is a larger space than $\big(\mathsf{V}_{\alpha,\beta},\left\|\cdot\right\|_{\alpha,\beta}\big)$.

Using the defining relation of $\widehat{\psi}$ in \eqref{eq: def transfer op} implies that its operator norm  with respect to the $\mathcal{L}_{\mu}^{1}$-norm is equal to $1$. This immediately implies that the modulus of any eigenvalue cannot exceed $1$. 
On the other hand, using the explicit form of the transfer operator from \eqref{eq: transfer operator calculated} we can calculate $\widehat{\psi}\mathbbm{1}=\mathbbm{1}$. 
We further note that the constant functions are contained in $\mathsf{V}_{\alpha,\beta}$ as well as in $\mathsf{V}'_{\alpha,\beta}$ implying that on both spaces the spectral radius is $1$.
By Lemma \ref{lem: quasi-compact} there exist subspaces $G,H$ of $\mathsf{V}'_{\alpha,\beta}$ fulfilling the properties as in Definition \ref{def: quasi-compact} and $\mathbbm{1}\in G$. 
Thus, there exist subspaces $\widetilde{G}\subset G$ and $\widetilde{H}\subset H$ such that $\mathbbm{1}\in\widetilde{G}$ and as subspaces of $G$ and $H$ they must also fulfill the properties of Definition \ref{def: quasi-compact}.
\end{proof}

\subsection{Boundedness of $\big\|\widetilde{h}\big\|_{\alpha,\beta}$}\label{subsec: boundedness}
In this section we prove Condition \eqref{en: bounded} making use of the conditions in \eqref{eq: boundedness condition}.

\begin{lemma}\label{lem: bounded}
There exist $0<\alpha<\beta<1$ such that 
$\big\|\widetilde{h}\big\|_{\alpha,\beta}<\infty$. 
\end{lemma}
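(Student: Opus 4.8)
The plan is to fix parameters $\alpha,\beta$ with $w<\alpha<\beta<1$, to be pinned down at the very end, and to control the two pieces of $\big\|\widetilde h\big\|_{\alpha,\beta}=\big\|\widetilde h\big\|_2+\big|\widetilde h\big|_{\alpha,\beta}$ separately, where $\widetilde h=h\circ\xi$ with $\xi(x)=\cot(\pi x)$. For the $\mathcal L^2$-part the substitution $y=\cot(\pi x)$ is exactly the measure isomorphism $\xi$, so it sends $\mathrm d\lambda_I(x)$ to $\mathrm d\mu(y)$ and gives $\big\|\widetilde h\big\|_2^2=\int_0^1 h(\cot\pi x)^2\,\mathrm d\lambda_I(x)=\int_{\mathbb R}h(y)^2\,\mathrm d\mu(y)=\mathbb E\big(h^2\big)$. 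By \eqref{eq: boundedness condition} we have $h(y)^2\ll|y|^{2w}$, so the integrand is $\ll|y|^{2w-2}$ at infinity and hence $\mu$-integrable because $2w<1$, while near $0$ (and on any compact set) $h$ is bounded, being continuous. Thus $\big\|\widetilde h\big\|_2<\infty$.

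For the seminorm I would split $I=[0,\delta]\cup[\delta,1-\delta]\cup[1-\delta,1]$, choosing $\delta>0$ so small that on $[0,\delta]$ and $[1-\delta,1]$ the quantity $|\cot\pi x|$ is large enough for the growth bounds \eqref{eq: boundedness condition} on $h$ and $h'$ to apply. On the central block $[\delta,1-\delta]$ the map $x\mapsto\cot\pi x$ is smooth with bounded derivative and its image is a compact interval on which $h$ is (locally) Lipschitz, so $R_\alpha\widetilde h$ is Lipschitz on $[\delta,1-\delta]$; hence $\osc\big(R_\alpha\widetilde h,B_\epsilon(x)\big)\ll\epsilon$ there, so $\osc\big(R_\alpha\widetilde h,B_\epsilon(x)\big)/\epsilon^{\beta}\ll\epsilon^{1-\beta}\to 0$ and this block contributes $0$ to the $\limsup$ defining $\big|\widetilde h\big|_{\alpha,\beta}$.

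The end blocks are the crux; I treat $[0,\delta]$, the other being symmetric. Near $x=0$ one has $\cot(\pi x)\sim1/(\pi x)$ and $|(\cot\pi x)'|\sim1/(\pi x^2)$, so \eqref{eq: boundedness condition} together with the product estimate for oscillations (as used in the proof of Lemma \ref{lem: quasi-compact}) gives $\big|R_\alpha\widetilde h(x)\big|\ll x^{\alpha-w}$ and $\big|(R_\alpha\widetilde h)'(x)\big|\ll x^{\alpha-w-2}$; the requirement $\alpha>w$ is what makes $R_\alpha\widetilde h$ bounded and vanishing at $0$, so that all the oscillations below are finite. Estimating $\osc\big(R_\alpha\widetilde h,B_\epsilon(x)\big)$ by the trivial bound $2\sup_{B_\epsilon(x)}\big|R_\alpha\widetilde h\big|$ for $x\le\sqrt\epsilon$ and by the mean-value bound $2\epsilon\sup_{B_\epsilon(x)}\big|(R_\alpha\widetilde h)'\big|$ for $x>\sqrt\epsilon$ — the crossover occurs at the scale $\sqrt\epsilon$ exactly because $(\alpha-w)+(2-\alpha+w)=2$ — one obtains
\begin{align*}
 \osc\big(R_\alpha\widetilde h,B_\epsilon(x)\big)\ll
 \begin{cases}
  \epsilon^{\alpha-w}, & 0<x\le 2\epsilon,\\[2pt]
  x^{\alpha-w}, & 2\epsilon<x\le\sqrt\epsilon,\\[2pt]
  \epsilon\,x^{\alpha-w-2}, & \sqrt\epsilon<x\le\delta.
 \end{cases}
\end{align*}
Dividing by $\epsilon^{\beta}$ and integrating over the three ranges, with the elementary computation $\int_{\sqrt\epsilon}^{\delta}x^{\alpha-w-2}\,\mathrm d\lambda_I(x)\ll\epsilon^{(\alpha-w-1)/2}$ on the last one (here $\alpha-w-2<-1$), I get
\begin{align*}
 \int_0^{\delta}\frac{\osc\big(R_\alpha\widetilde h,B_\epsilon(x)\big)}{\epsilon^{\beta}}\,\mathrm d\lambda_I(x)\ll\epsilon^{\,1+\alpha-w-\beta}+\epsilon^{\,(1+\alpha-w)/2-\beta},
\end{align*}
whose worst exponent is $(1+\alpha-w)/2-\beta$. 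Hence $\limsup_{\epsilon\searrow0}$ of the left-hand side is finite as soon as $\beta\le(1+\alpha-w)/2$, and the block $[1-\delta,1]$ is handled identically.

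It remains to choose $\alpha,\beta$ meeting all the constraints: $w<\alpha$, $\alpha<\beta$ (so that Lemma \ref{lem: quasi-compact1} applies), $\beta<1$, and $\beta\le(1+\alpha-w)/2$. The window $\alpha<\beta\le(1+\alpha-w)/2$ is non-empty precisely when $\alpha<1-w$, while $(1+\alpha-w)/2<1$ always; so it suffices to fix any $\alpha\in(w,1-w)$ — a non-empty interval with generous room under the standing assumption $w<1/3$ — and then any $\beta\in(\alpha,(1+\alpha-w)/2]$. For these $\alpha,\beta$ both parts above are finite, giving $\big\|\widetilde h\big\|_{\alpha,\beta}<\infty$. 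The main obstacle is the end-block estimate: one must combine the two competing oscillation bounds at the correct scale $\sqrt\epsilon$, keep careful track of how the resulting exponents depend on $\alpha,\beta,w$, and then reconcile the constraint $\beta\le(1+\alpha-w)/2$ with $\alpha<\beta$ — which is exactly the algebraic squeeze that forces $w$ to be small.
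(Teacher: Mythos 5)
Your proof is correct, but your endpoint analysis is genuinely different from (and sharper than) the paper's. The paper also isolates the two end blocks, but there it discards the extra decay of $R_{\alpha}\widetilde h$ near $0$ and $1$: it only uses $(R_{\alpha}\widetilde h)'(x)\ll x^{-2}(1-x)^{-2}$ together with the trivial bound $\osc\left(R_{\alpha}\widetilde h, B_{\epsilon}(x)\right)\ll 1$ on $[0,K_{\epsilon}]$ and $[1-K_{\epsilon},1]$, with the crossover placed at $K_{\epsilon}\asymp\epsilon^{(1-\beta)/2}$ as in \eqref{eq: def Kepsilon}; the end blocks then contribute $K_{\epsilon}\cdot\epsilon^{-\beta}\asymp\epsilon^{(1-3\beta)/2}$, which is exactly what forces $\beta<1/3$ and hence the choice \eqref{eq: ineq s alpha beta} (cf.\ Remark \ref{rem: bound of 1/3}). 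You instead retain the pointwise bounds $\left|R_{\alpha}\widetilde h(x)\right|\ll x^{\alpha-w}$ and $\left|(R_{\alpha}\widetilde h)'(x)\right|\ll x^{\alpha-w-2}$, put the crossover at the natural scale $\sqrt{\epsilon}$, and integrate over three regimes, which yields the milder constraint $\beta\le(1+\alpha-w)/2$ alongside $w<\alpha<\beta<1$; under the standing hypothesis $w<1/3$ this window is non-empty, so the lemma follows, and in fact your bookkeeping would work for any $w<1/2$ (the $\mathcal{L}^2$ part then being the only restriction), showing that the $1/3$-threshold stems from the paper's cruder two-regime estimate rather than from the Banach-space method itself. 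The one point you should state explicitly (the paper assumes the same thing through its ``global'' reading of \eqref{eq: boundedness condition}) is that the one-sided derivatives of $h$ are locally bounded, so that $h$ is Lipschitz on compact sets and your oscillation-by-derivative bounds are legitimate on the central block and at the matching scales; with that understood, your exponent computations and the final choice $\alpha\in(w,1-w)$, $\beta\in\left(\alpha,(1+\alpha-w)/2\right]$ are correct and give $\big\|\widetilde h\big\|_{\alpha,\beta}<\infty$.
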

\begin{proof}
We choose $\alpha$ and $\beta$ so that the following inequality is fulfilled:
\begin{align}
 w<\alpha<\beta<\frac{1}{3}.\label{eq: ineq s alpha beta}
\end{align}
Since we are assuming that $w<1/3$ in Theorem \ref{thm: main thm}, it is always possible to find such numbers $\alpha$ and $\beta$. 

 First we note that 
 \begin{align*}
  \lim_{x\to 0}\frac{\xi(x)}{x}=\frac{1}{\pi}\qquad\text{ and }\qquad\lim_{x\to 1}\frac{\xi(x)}{1-x}=\frac{1}{\pi}.
 \end{align*}
 This and Condition \eqref{eq: boundedness condition} imply 
 \begin{align}
  \left|\widetilde{h}(x)\right|\ll x^{-w}\cdot (1-x)^{-w},\label{eq: F cdot xdelta}
 \end{align}
 and also 
 \begin{align}
  \left|h'\left(\xi\left(x\right)\right)\right|\ll x^{-w}\cdot \left(1-x\right)^{-w}.\label{eq: F cdot xdelta1}
 \end{align}
 Here and in the following we understand the $\ll$-sign globally, i.e.\ by \eqref{eq: F cdot xdelta} we mean that there exists $K>0$ such that
 $\left|\widetilde{h}(x)\right|\leq K\cdot x^{-w}\cdot (1-x)^{-w}$, for all $x\in I$ and similarly for 
 \eqref{eq: F cdot xdelta1}. 
 We can conclude this since we were assuming that $h$ is continuous and the left and right derivatives exist.

 In order to estimate the modulus of the first derivative of $\widetilde{h}$ we notice that 
 \begin{align*}
  \left|\left.\frac{\mathrm{d}}{\mathrm{d}x}\,\widetilde{h}(x)\right|_{x=c}\right|
  &=\left|\left(\left.\frac{\mathrm{d}}{\mathrm{d}y}\, h\left(y\right)\right|_{y=\xi\left(c\right)}  \cdot \left.\frac{\mathrm{d}}{\mathrm{d}x}\, \xi(x)\right|_{x=c}\right)\right|
 \end{align*}
implying
\begin{align}
  \left|\frac{\mathrm{d}}{\mathrm{d}x}\, \widetilde{h}(x)\right|\ll\left|h'\left(\xi\left(x\right)\right)\right|\cdot \left|\xi'(x)\right|.\label{eq: deriv Fs}
 \end{align}
Furthermore, we have that 
 \begin{align}
  \xi'(x)=-\frac{\pi}{\sin^2\left(\pi\cdot x\right)}.\label{eq: deriv xi}
 \end{align}
If we set $g(x)\coloneqq x^{\alpha}\cdot\left(1-x\right)^{\alpha}\cdot \widetilde{h}(x)$, then
 \begin{align*}
  g'(x)= \alpha\cdot x^{\alpha-1}\cdot \left(1-x\right)^{\alpha}\cdot \widetilde{h}(x)-\alpha\cdot x^{\alpha}\cdot\left(1-x\right)^{\alpha-1}\cdot  \widetilde{h}(x)+  x^{\alpha}\cdot \left(1-x\right)^{\alpha}\cdot \widetilde{h}'(x).
 \end{align*}
We can conclude from  our choice in \eqref{eq: ineq s alpha beta} and from \eqref{eq: F cdot xdelta}
that 
 \begin{align*}
  \alpha\cdot x^{\alpha-1}\cdot \left(1-x\right)^{\alpha}\cdot \widetilde{h}(x)\ll x^{-1}\qquad\text{ and }\qquad
  \alpha\cdot x^{\alpha}\cdot \left(1-x\right)^{\alpha-1}\cdot \widetilde{h}(x)\ll (1-x)^{-1}.
 \end{align*}
On the other hand, \eqref{eq: deriv Fs} together with \eqref{eq: F cdot xdelta1} and \eqref{eq: deriv xi} yields
$x^{\alpha}\cdot\left(1-x\right)^{\alpha}\cdot \widetilde{h}'(x)\ll x^{-2}\cdot\left(1-x\right)^{-2}$
and thus $g'(x)\ll x^{-2}\cdot\left(1-x\right)^{-2}$.
This implies that there exist $K, \widetilde{K}$ such that for all $\epsilon>0$ and $x\in (\epsilon,1-\epsilon)$ it holds that
 \begin{align*}
  \osc\left(g, B_{\epsilon}(x)\right)
  \leq K\cdot \epsilon \cdot \sup_{y\in B_{\epsilon}(x)} g'(y)
  \leq \widetilde{K}\cdot \epsilon \cdot \left(x-\epsilon\right)^{-2}\cdot \left(1-x+\epsilon\right)^{-2}.
 \end{align*}
For the following we set 
\begin{align}
 K_{\epsilon}\coloneqq \epsilon^{\frac{1-\beta}{2}}\cdot \widetilde{K}^{1/2}+\epsilon.\label{eq: def Kepsilon}
\end{align}
We can choose $\epsilon$ sufficiently small such that for all $x\in[ K_{\epsilon},1-K_{\epsilon}]$ we have that
\begin{align}
 \osc\left(g, B_{\epsilon}(x)\right)
 &\leq \widetilde{K}\cdot \epsilon \cdot \left(K_{\epsilon}-\epsilon\right)^{-2}
 \cdot \left(1-K_{\epsilon}+\epsilon\right)^{-2}\notag\\
 &= \widetilde{K}\cdot \epsilon \cdot \left(\epsilon^{\frac{1-\beta}{2}}\cdot \widetilde{K}^{1/2}\right)^{-2}
 \cdot \left(1-\epsilon^{\frac{1-\beta}{2}}\cdot \widetilde{K}^{1/2}\right)^{-2}
 \leq 2\cdot \epsilon^{\beta}.\label{eq: osc g Beps}
\end{align}
Indeed, $K_{\epsilon}$ in \eqref{eq: def Kepsilon} is chosen such that 
$\osc\left(g, B_{\epsilon}(x)\right)\ll \epsilon^{\beta}$ as $\epsilon\searrow 0$. 

In order to consider the cases $x\leq K_\epsilon$ and $x\geq 1- K_{\epsilon}$ 
we notice that a similar argument as above using \eqref{eq: ineq s alpha beta} and 
\eqref{eq: F cdot xdelta} yields
\begin{align}
 \osc\left(g, B_{\epsilon}(x)\right)\ll 1.\label{eq: osc g Beps1}
\end{align}

In the next steps we estimate
$\int_0^{1}\osc\left(R_{\alpha}\widetilde{h}, B_{\epsilon}(x)\right)/\epsilon^{\beta}\mathrm{d}\lambda_I(x)$,
using \eqref{eq: osc g Beps} and \eqref{eq: osc g Beps1}.
We split the integral into the following parts:
\begin{align}
 \MoveEqLeft\int_0^{1} \osc\left(R_{\alpha}\widetilde{h}, B_{\epsilon}(x)\right)\mathrm{d}\lambda_I(x)\notag\\
 &= \int_0^{K_{\epsilon}}\osc\left(R_{\alpha}\widetilde{h}, B_{\epsilon}(x)\right)\mathrm{d}\lambda_I(x)
 + \int_{K_{\epsilon}}^{1-K_{\epsilon}}\osc\left(R_{\alpha}\widetilde{h}, B_{\epsilon}(x)\right)\mathrm{d}\lambda_I(x) \notag\\
 &\qquad+\int_{1-K_{\epsilon}}^{1}\osc\left(R_{\alpha}\widetilde{h}, B_{\epsilon}(x)\right)\mathrm{d}\lambda_I(x)\label{eq: int sep}
\end{align}
and by \eqref{eq: osc g Beps1} there exists $M>0$ such that we have for the first and last summand of \eqref{eq: int sep} that
\begin{align}
 \int_0^{K_{\epsilon}}\osc\left(R_{\alpha}\widetilde{h}, B_{\epsilon}(x)\right)\mathrm{d}\lambda_I(x)
 +\int_{K_{\epsilon}}^{1-K_{\epsilon}}\osc\left(R_{\alpha}\widetilde{h}, B_{\epsilon}(x)\right)\mathrm{d}\lambda_I(x)
 &\leq K_{\epsilon}\cdot M.\label{eq: int sep1}
\end{align}
In order to estimate the second summand of \eqref{eq: int sep} we notice that by \eqref{eq: osc g Beps} we have that
\begin{align}
 \int_{K_{\epsilon}}^{1-K_{\epsilon}}\osc\left(R_{\alpha}\widetilde{h}, B_{\epsilon}(x)\right)\mathrm{d}\lambda_I(x)
 &\leq 2\cdot \epsilon^{\beta},\label{eq: int sep2}
\end{align}
if $\epsilon$ is sufficiently small. 
If we combine \eqref{eq: int sep}, \eqref{eq: int sep1}, and \eqref{eq: int sep2}
we obtain 
\begin{align*}
 \int_0^{1} \osc\left(R_{\alpha}\widetilde{h}, B_{\epsilon}(x)\right)\mathrm{d}\lambda_I(x)
 \leq K_{\epsilon}\cdot M+2\cdot \epsilon^{\beta}.
\end{align*}
Hence, using the definition of $K_{\epsilon}$ in \eqref{eq: def Kepsilon}
and the condition on $\alpha$ and $\beta$ in \eqref{eq: ineq s alpha beta} yields 
\begin{align}
     \left|\widetilde{h}\right|_{\alpha, \beta}
 &\leq \limsup_{\epsilon \searrow 0}  K_{\epsilon}\cdot \epsilon^{-\beta}\cdot M+2
 =\limsup_{\epsilon \searrow 0}  \left(\epsilon^{\frac{1-\beta}{2}}\cdot \widetilde{K}^{1/2}+\epsilon\right)\cdot \epsilon^{-\beta}\cdot M+2\notag\\
 &\leq \limsup_{\epsilon \searrow 0}  \left(\widetilde{K}\cdot \epsilon^{\frac{1}{2}-\frac{3}{2}\cdot\beta}+\epsilon^{1-\beta}\right)\cdot M+2\label{eq: |h|alphabeta}
 <\infty.
\end{align}

Finally, we have to prove that $\widetilde{h}$ is an $\mathcal{L}^2_{\mu}$-function.
We have that 
\begin{align*}
 \int \widetilde{h}^2\mathrm{d}\lambda_I=\int h^2\circ \xi\mathrm{d}\lambda_I=\int h^2\mathrm{d}\mu.
\end{align*}
From \eqref{eq: boundedness condition} together with the differentiability that there exists $K>0$ such that for all $x\in\mathbb{R}$ we have that $h^2\left(x\right)\leq K\cdot \left|x\right|^{2w}$. 
Using \eqref{eq: def measure} and the fact that $w<1/2$ yields 
\begin{align*}
 \int h^2\left(x\right)\mathrm{d}\mu\left(x\right)
 &\leq \frac{K}{\pi}\cdot \int \frac{\left|x\right|^{2w}}{\left(x^2+1\right)}\;\mathrm{d}\lambda_I\left(x\right)<\infty.
\end{align*}
This proves \eqref{en: bounded}. 
\end{proof}

\begin{Rem}\label{rem: bound of 1/3}
 Looking closer at the proof we see that we indeed require $w<1/3$.
 We have to estimate the summands in \eqref{eq: int sep} separately. 
 $K_{\epsilon}$ in \eqref{eq: def Kepsilon} is chosen in a way to ensure $\int_{K_{\epsilon}}^{1-K_{\epsilon}}\osc\left(R_{\alpha}\widetilde{h}, B_{\epsilon}(x)\right)\mathrm{d}\lambda_I(x)\ll \epsilon^{\beta}$, as $\epsilon\searrow 0$, see \eqref{eq: int sep2}. This implies 
 \begin{align*}
 \limsup_{\epsilon\searrow 0}\frac{\int_{K_{\epsilon}}^{1-K_{\epsilon}}\osc\left(R_{\alpha}\widetilde{h}, B_{\epsilon}(x)\right)\mathrm{d}\lambda_I(x)}{\epsilon^{\beta}}<\infty
 \end{align*}
 and one can easily see that $K_{\epsilon}$ cannot be chosen smaller in magnitude. 
This in turn implies that in \eqref{eq: |h|alphabeta} we need for $\left|\widetilde{h}\right|_{\alpha,\beta}$ to be bounded that $1/2-3/2\cdot\beta>1$ giving the bound $\beta<1/3$. 
\end{Rem}

\subsection{Proof of Corollary \ref{cor: CLT for zeta}}\label{subsec: proof of Cor}
For proving Corollary \ref{cor: CLT for zeta} we will make use of the following lemma.
\begin{lemma}\label{lem: bound zeta}
 Let $s\in(0,1)$. Then for any $\delta>0$ and $k\in\mathbb{N}_0$ 
 we have that 
 \begin{align*}
  \zeta^{(k)}\left(s+i\cdot t\right)\ll_{k,\delta} \left|t\right|^{\frac{1-s}{2}+\delta},
 \end{align*}
 where $\zeta^{(k)}$ denotes the $k$-th derivative of the Riemann zeta-function $\zeta$ and $\zeta^{(0)}\coloneqq \zeta$. 
\end{lemma}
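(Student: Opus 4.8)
The plan is to prove the bound on $\zeta^{(k)}(s+it)$ for $s \in (0,1)$ by combining the classical convexity (Phragmén–Lindelöf) bound for $\zeta$ in the critical strip with Cauchy's integral formula to transfer the bound to the derivatives. The key input is the standard subconvexity-type estimate coming from the functional equation and the Phragmén–Lindelöf principle: on the line $\Re z = s$ with $0 < s < 1$ one has $\zeta(s+it) \ll_\delta |t|^{(1-s)/2 + \delta}$ for every $\delta > 0$. This is classical (see e.g.\ Titchmarsh's book on the zeta-function); the exponent $(1-s)/2$ is the linear interpolation between the exponent $0$ at $s = 1$ (where $\zeta$ is bounded) and the exponent $1/2$ at $s = 0$ (which follows from the functional equation together with $\zeta(1+it) \ll \log|t|$, absorbed into the $\delta$). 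So the case $k = 0$ is exactly this classical fact and requires no new work.

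For the derivatives, the plan is to fix $s \in (0,1)$ and $\delta > 0$, pick an auxiliary $\delta' = \delta/2$ and a small radius, say $r = \min\{s, 1-s\}/2 > 0$, and apply Cauchy's integral formula
\begin{align*}
 \zeta^{(k)}(s + it) = \frac{k!}{2\pi i} \oint_{|z - (s+it)| = r} \frac{\zeta(z)}{(z - (s+it))^{k+1}}\, \mathrm{d}z.
\end{align*}
On this circle every point $z = \sigma + i\tau$ satisfies $\sigma \in (s - r, s + r) \subset (0,1)$, so $\zeta(z) \ll_{\delta'} |\tau|^{(1-\sigma)/2 + \delta'} \ll |\tau|^{(1-s+r)/2 + \delta'}$, and for $|t|$ large $|\tau| \asymp |t|$, so $\zeta(z) \ll |t|^{(1-s)/2 + r/2 + \delta'}$. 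Estimating the integral trivially by (length) $\times$ (max of integrand) gives $\zeta^{(k)}(s+it) \ll_{k} r^{-k} \cdot |t|^{(1-s)/2 + r/2 + \delta'}$.

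This does not immediately give the claimed exponent because of the extra $r/2$. The fix is that $r$ may be chosen as small as we like: given the target $\delta$, first shrink $r$ so that $r/2 < \delta/2$ and also keep $s \pm r \subset (0,1)$, then take $\delta' = \delta/2$, so that $r/2 + \delta' < \delta$ and the bound becomes $\zeta^{(k)}(s+it) \ll_{k,\delta} |t|^{(1-s)/2 + \delta}$ as required; the implied constant now depends on $k$ and $\delta$ (through $r^{-k}$ and the constant in the convexity bound for exponent $\delta'$), which is exactly what the statement allows. For small $|t|$ the function $\zeta^{(k)}(s+it)$ is continuous away from the pole at $z=1$, but note the pole is at $s=1$ only, so on the vertical line $\Re z = s < 1$ there is no singularity and $\zeta^{(k)}(s+it)$ is bounded on any compact $t$-interval; hence the estimate holds for all $t$ after adjusting the implied constant. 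The only genuine obstacle is citing the convexity bound correctly — in particular making sure the $\log$ factor at $s$ near $1$ and the functional-equation factor near $s = 0$ are both swallowed by an arbitrarily small $\delta'$, which is standard — and keeping track of the fact that we are allowed to let the implied constant blow up as $\delta \to 0$, which makes the $r$-shrinking trick legitimate.
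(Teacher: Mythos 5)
Your argument is correct and is essentially the same as the paper's, which offers no written proof but simply defers to the proof of Example 2.1 in \cite{lee_ergodic_2017} and to Titchmarsh's convexity estimates; those sources do exactly what you do, namely combine the Phragm\'en--Lindel\"of convexity bound $\zeta(\sigma+i\tau)\ll_{\delta'}|\tau|^{(1-\sigma)/2+\delta'}$ in the critical strip with Cauchy's integral formula on a small circle, absorbing the radius-dependent loss in the exponent into $\delta$. Your handling of the shrinking radius and of bounded $|t|$ is fine, so no further work is needed.
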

 This lemma can be deduced from \cite[Proof of Example 2.1]{lee_ergodic_2017}.
 For background see also \cite[p.\ 95/96]{titchmarsh_theory_1986}. 
\begin{proof}[Proof of Corollary \ref{cor: CLT for zeta}]
 In order to prove Corollary \ref{cor: CLT for zeta} we want to show that if we fix $s\in(1/3, 1)$ and set $h(x)=\Re\zeta\left(s+ix\right)$ (or $h(x)=\Im\zeta\left(s+ix\right)$ or $h(x)=\left|\zeta\left(s+ix\right)\right|$ respectively), then \eqref{eq: boundedness condition} is fulfilled. For this choice, clearly the sums \eqref{eq: def Sn gen} and \eqref{eq: sum Re} (or \eqref{eq: def Sn gen} and \eqref{eq: sum Im} or \eqref{eq: def Sn gen} and \eqref{eq: def Sn} respectively) coincide and Theorem \ref{thm: main thm} is applicable. 

 Given $s\in (1/3,1)$ we might set $\delta_s\coloneqq s/4-1/12$ implying that $w\coloneqq w_s\coloneqq (1-s)/2+\delta_s<1/3$.
 Hence, we can conclude from Lemma \ref{lem: bound zeta} that for each fixed $s\in (1/3,1)$ there exists $w<1/3$ such that 
 \begin{align*}
  \Re\zeta\left(s+i\cdot x\right)\ll \left|x\right|^w\quad\text{ and }\quad
  \Im\zeta\left(s+i\cdot x\right)\ll \left|x\right|^w\quad\text{ and }\quad 
  \left|\zeta\left(s+i\cdot x\right)\right|\ll \left|x\right|^{w},
 \end{align*}
 as $x\to\pm \infty$ giving the first statement in \eqref{eq: boundedness condition}. 
 
 Obviously, if $h(x)=\Re\zeta\left(s+ix\right)$, then its derivative can be written as 
 \begin{align*}
  h'(x)=\left.\frac{\partial}{\partial y}\, \Re\zeta\left(s+iy\right)\right|_{y=x}
 \end{align*}
 and analogously for the imaginary part.
 Since $\zeta$ is complex differentiable on $\mathbb{C}\backslash \left\{1\right\}$,
 in particular the partial derivatives $\partial/\partial y\, \Re\zeta\left(s+i y\right)$ and 
 $\partial/\partial y\, \Im\zeta\left(s+i y\right)$ exist for $s\in (1/3,1)$, see for example \cite[p.\ 32]{lang_complex_1999}.
 
 Furthermore, Lemma \ref{lem: bound zeta} applied for $k=1$ also implies that for each fixed $s\in (1/3,1)$ there exists $w<1/3$ such that 
 \begin{align}
  \zeta'\left(s+i\cdot x\right)\ll \left|x\right|^{w},\label{eq: zeta 1 bounded}
 \end{align}
 as $x\to\pm \infty$. 
 
 As $\zeta$ is complex differentiable if $s\in (1/3, 1)$ and $x\in \mathbb{R}$, the derivative with respect to $z=s+iy$ can be written as 
 \begin{align*}
  \frac{\mathrm{d}}{\mathrm{d} z}\,\zeta(z)=-i\cdot \frac{\partial }{\partial y}\, \Re\zeta\left(s+i\cdot y\right)+\frac{\partial }{\partial y}\, \Im\zeta\left(s+i\cdot y\right),
 \end{align*}
 using the Cauchy-Riemann formulas, see for example \cite[p. 32]{lang_complex_1999}. 
 But from this we obtain 
 \begin{align*}
  \left|\left.\frac{\partial }{\partial y}\, \Re\zeta\left(s+i\cdot y\right)\right|_{y=x}\right|
  &\leq \left(\left(\left.\frac{\partial }{\partial y}\, \Re\zeta\left(s+i\cdot y\right)\right|_{y=x}\right)^2+ \left(\left.\frac{\partial }{\partial x}\, \Im\zeta\left(s+i\cdot x\right)\right|_{y=x}\right)^2\right)^{1/2}\\
  &= \left|\zeta'\left(s+i\cdot x\right)\right|
 \end{align*}
 and similarly
 \begin{align*}
  \left|\left.\frac{\partial }{\partial y}\, \Im\zeta\left(s+i\cdot y\right)\right|_{y=x}\right|
  &\leq  \left|\zeta'\left(s+i\cdot x\right)\right|.
 \end{align*}

 Hence, for $s\in (1/3,1)$, \eqref{eq: zeta 1 bounded} implies that there exists $w<1/3$ such that
 \begin{align}
  \left.\frac{\partial }{\partial y}\,\Re\zeta\left(s+i\cdot y\right)\right|_{y=x}\ll \left|x\right|^{w}\quad\text{ and }\quad 
  \left.\frac{\partial }{\partial y}\, \Im\zeta\left(s+i\cdot y\right)\right|_{y=x}\ll \left|x\right|^{w},\label{eq: partial real and imag}
 \end{align}
 as $x\to\pm \infty$, which gives the second estimate in \eqref{eq: boundedness condition} and thus the first two statements of the corollary.
 
 Finally, we will prove the second estimate in \eqref{eq: boundedness condition} for \eqref{eq: def Sn}. 
 For those points, where $\zeta\left(s+iy\right)\neq 0$ we can calculate the derivative
  \begin{align*}
     \left|\frac{\partial}{\partial y}\, \left|\zeta\left(s+i y\right)\right|\right|
     &= \left|\frac{\partial}{\partial y}\, \sqrt{\left(\Re\zeta\left(s+i y\right)\right)^2+\left(\Im\zeta\left(s+i y\right)\right)^2}\right|\\
     &= \frac{1}{2}\cdot \frac{1}{\sqrt{\left(\Re\zeta\left(s+i y\right)\right)^2+\left(\Im\zeta\left(s+i y\right)\right)^2}}\\
     &\qquad\cdot\left|2\cdot \Re\zeta\left(s+i y\right)\cdot \frac{\partial }{\partial y}\, \Re\zeta\left(s+i y\right)+2\cdot \Im\zeta\left(s+i y\right)\cdot \frac{\partial}{\partial y}\, \Im\zeta\left(s+i y\right)\right|\\
     &\leq \max\left\{\left|\frac{\partial}{\partial y}\Re\zeta\left(s+i y\right)\right|, \left|\frac{\partial}{\partial y}\Im\zeta\left(s+i y\right)\right|\right\}
  \end{align*}
  and by taking limits we also have for those points on which $\zeta\left(s+iy\right)=0$ that the one-sided (left and right) partial derivatives denoted by $\partial/\partial y^-\, \left|\zeta\left(s+i y\right)\right|$ and $\partial/\partial y^+\, \left|\zeta\left(s+i y\right)\right|$ exist as well. 
  For these we also have 
  \begin{align*}
   \left|\frac{\partial}{\partial y^-}\, \left|\zeta\left(s+i y\right)\right|\right|\leq \max\left\{\left|\frac{\partial}{\partial y}\Re\zeta\left(s+i y\right)\right|, \left|\frac{\partial}{\partial y}\Im\zeta\left(s+i y\right)\right|\right\}
  \end{align*}
  and similarly for the right partial derivative. 
  Hence, we can conclude from \eqref{eq: partial real and imag} that 
  \begin{align*}
   \max\left\{\left.\frac{\partial }{\partial y^-}\,\left|\zeta\left(s+i\cdot y\right)\right|\right|_{y=x},\left.\frac{\partial }{\partial y^+}\,\left|\zeta\left(s+i\cdot y\right)\right|\right|_{y=x}\right\}\ll \left|x\right|^{w}
  \end{align*}
  and we can for all three cases \eqref{eq: sum Re}, \eqref{eq: sum Im}, and \eqref{eq: def Sn} conclude that a central limit theorem holds (possibly degenerate with $\sigma^2=0$).
  
We are left to show that $\sigma^2>0$. We use \cite[Proposition 3.2]{melbourne_statistical_2004} which 
around others states that it is enough to show that $h$ cannot be a coboundary with a transfer function from the quasi-compact Banach space, in our case $\mathsf{V}_{\alpha,\beta}$, i.e.\ there exists no 
function $f\in \mathsf{V}_{\alpha,\beta}$ such that $\widetilde{h}=f-f\circ \phi$ almost surely.
However, we can also choose a Banach space $\mathsf{V}_{\alpha,\beta}''$ which is a subspace of $\mathsf{V}_{\alpha,\beta}$ such that it is still quasi-compact. 
We set $\left\|\cdot\right\|_{\alpha,\beta}''\coloneqq \left\|R_{\alpha} \cdot\right\|+\left|\cdot\right|_{\alpha,\beta}$ and let $\mathsf{V}_{\alpha,\beta}''\coloneqq \left\{f\colon \mathbb{R}\to\mathbb{R}\colon f \text{ is continuous on }(0,1)\text{ and } \left\|f\right\|_{\alpha,\beta}''<\infty\right\}$. 
If we choose $\alpha<1/2$, then $\left\|f\right\|_{\alpha,\beta}''<\infty$ implies that $f\in\mathcal{L}^2$.
That $\mathsf{V}_{\alpha,\beta}''$ is a Banach space follows in the same way as the proof of Lemma \ref{lem: completeness} and the fact that $\mathsf{V}_{\alpha,\beta}''$ is still quasi-compact also follows in the same way as in Lemma \ref{lem: quasi-compact1}. 
If $\alpha$ is chosen in the same way as in Lemma \ref{lem: bounded}, then $\left\|\widetilde{h}\right\|_{\alpha,\beta}''<\infty$. 

Hence, we have to exclude that there exists a function $f\in\mathsf{V}_{\alpha,\beta}''$ such that 
$\widetilde{h}=f-f\circ \phi$ almost surely. 
$f\in\mathsf{V}_{\alpha,\beta}''$ implies in particular that $f$ is continuous. Thus
$\widetilde{h}=f-f\circ \phi$ almost surely is equivalent to $\widetilde{h}=f-f\circ \phi$ surely. 
Hence, $\widetilde{h}$ being a coboundary implies that for each periodic point $x$ of period $d$ that 
$\sum_{k=1}^d \big(\widetilde{h}\circ \phi^{k-1}\big)\left(x\right)=\sum_{k=1}^d\left(f\circ \phi^{k-1}-f\circ \phi^k\right)\left(x\right)=0$ which can easily be excluded numerically.
The points $1/3$ and $2/3$ are alternating periodic points with respect to $\phi$ as well as the points $1/7$, $2/7$, and $4/7$. 
Calculating the values with Maxima using double gives 
for the real part 
\begin{align*}
\widetilde{h}\left(1/3\right)+\widetilde{h}\left(2/3\right)
 &= \Re\zeta\left(1/2+i\cdot \cot\left(\pi/3\right)\right)+
\Re\zeta\left(1/2+i\cdot \cot\left(2\pi/3\right)\right)
\sim -0.632184187171495
\end{align*}
and for the absolute value
\begin{align*}
\widetilde{h}\left(1/3\right)+\widetilde{h}\left(2/3\right)= \left|\zeta\left(1/2+i\cdot \cot\left(\pi/3\right)\right)\right|+
\left|\zeta\left(1/2+i\cdot \cot\left(2\pi/3\right)\right)\right|\sim 1.99288350865465 
\end{align*}
each with $15$ valid digits.
As the imaginary part of $\zeta$ is an odd function, we cannot use the points $1/3$ and $2/3$, but we obtain
\begin{align*}
 \MoveEqLeft\widetilde{h}\left(1/7\right)+\widetilde{h}\left(2/7\right)+\widetilde{h}\left(4/7\right)\\
 &= \Im\zeta\left(1/2+i\cdot \cot\left(\pi/7\right)\right)+
\Im\zeta\left(1/2+i\cdot \cot\left(2\pi/7\right)\right)+
\Im\zeta\left(1/2+i\cdot \cot\left(4\pi/7\right)\right)\\
&\sim -0.448038121638635
\end{align*}
giving $\sigma^2>0$ if $s=1/2$.
\end{proof}
\begin{Rem}\label{rem: numerics}
 To also prove that $\sigma^2>0$ for other real values $s\neq 1/2$ one has to find periodic points $x$ with period $d$ such that $\sum_{k=1}^d \big(\widetilde{h}\circ \phi^{k-1}\big)\left(x\right)\neq 0$. This is numerically not difficult for single values. However, more rigorous work has to be done to prove that this is possible for all $s\in(1/3,1)$. 
\end{Rem}

\subsection*{Acknowledgements}
This work was partly done at the Research School of Finance, Actuarial Studies and Statistics at the Australian National University and partly at Scuola  Normale Superiore di Pisa.
The author acknowledges the support of the Centro di Ricerca Matematica Ennio De Giorgi and of UniCredit Bank R\&D group for financial support through the “Dynamics and Information TheoryInstitute”  at  the  Scuola  Normale Superiore.

The author would like to thank Ade Irma Suriajaya for her hospitality at Kyushu University and J\"orn Steuding for his hospitality at University of W\"urzburg. Furthermore, the author thanks the Women in Mathematics Special Interest Group of the Australian Mathematical Society for being awarded by a Cheryl E. Praeger Travel Award which facilitated the visit at Kyushu University. 

Finally, the author is grateful for useful discussions with Ade Irma Suriajaya, Junghun Lee and J\"orn Steuding.


\begin{thebibliography}{ITM50}

\bibitem[Aar77]{aaronson_ergodic_1977}
J.~Aaronson.
\newblock On the ergodic theory of non-integrable functions and infinite
  measure spaces.
\newblock {\em Israel Journal of Mathematics}, 27(2):163--173, 1977.

\bibitem[AW73]{adler_ergodic_1973}
R.~L. Adler and B.~Weiss.
\newblock The ergodic infinite measure preserving transformation of {B}oole.
\newblock {\em Israel J. Math.}, 16:263--278, 1973.

\bibitem[Bal00]{baladi_positive_2000}
V.~Baladi.
\newblock {\em Positive transfer operators and decay of correlations}.
\newblock World Scientific, Singapore, 2000.

\bibitem[Bla97]{blank_discreteness_1997}
M.~Blank.
\newblock {\em Discreteness and continuity in problems of chaotic dynamics},
  volume 161 of {\em Translations of Mathematical Monographs}.
\newblock American Mathematical Society, Providence, RI, 1997.
\newblock Translated from the Russian manuscript by the author.

\bibitem[DF37]{doeblin_sur_1937}
W.~Doeblin and R.~Fortet.
\newblock Sur des cha{\^\i}nes {\`a} liaisons compl{\`e}tes.
\newblock {\em Bulletin de la Soci{\'e}t{\'e} Math{\'e}matique de France},
  65:132--148, 1937.

\bibitem[EG15]{elaissaoui_logarithmic_2015}
L.~Elaissaoui and Z.~E. Guennoun.
\newblock On logarithmic integrals of the {R}iemann zeta-function and an
  approach to the {R}iemann {H}ypothesis by a geometric mean with respect to an
  ergodic transformation.
\newblock {\em Eur. J. Math.}, 1(4):829--847, 2015.

\bibitem[GH88]{guivarch_thoeremes_1988}
Y.~Guivarc'h and J.~Hardy.
\newblock Th\'eor\`emes limites pour une classe de cha\^\i nes de {M}arkov et
  applications aux diff\'eomorphismes d'{A}nosov.
\newblock {\em Ann. Inst. H. Poincar\'e Probab. Statist.}, 24(1):73--98, 1988.

\bibitem[HH01]{hennion_limit_2001}
H.~Hennion and L.~Herv\'{e}.
\newblock {\em Limit Theorems for Markov Chains and Stochastic Properties of
  Dynamical Systems by Quasi-Compactness}.
\newblock Springer, Berlin, 2001.

\bibitem[II07]{ishitani_invariant_2007}
H.~Ishitani and K.~Ishitani.
\newblock Invariant measures for a class of rational transformations and
  ergodic properties.
\newblock {\em Tokyo J. Math.}, 30(2):325--341, 2007.

\bibitem[Ish13]{ishitani_transformations_2013}
H.~Ishitani.
\newblock Transformations which preserve {C}auchy distributions and their
  ergodic properties.
\newblock {\em Tokyo J. Math.}, 36(1):177--193, 2013.

\bibitem[ITM50]{ionescu_theorie_1950}
C.~T. Ionescu-Tulcea and G.~Marinescu.
\newblock Theorie ergodique pour des classes d'operations non completement
  continues.
\newblock {\em Annals of Mathematics}, 52(1):140--147, 1950.

\bibitem[Kel85]{keller_generalized_1985}
G.~Keller.
\newblock Generalized bounded variation and applications to piecewise monotonic
  transformations.
\newblock {\em Z. Wahrsch. Verw. Gebiete}, 69(3):461--478, 1985.

\bibitem[KS05]{kessebohmer_intermediately_2019}
M.~Kesseb{\"o}hmer and T.~Schindler.
\newblock Intermediately trimmed strong laws for {B}irkhoff sums on subshifts
  of finite type.
\newblock {\em Dyn. Syst.}, 2019, (online first), DOI:
  10.1080/14689367.2019.1667305.

\bibitem[Lan99]{lang_complex_1999}
S.~Lang.
\newblock {\em Complex analysis}, volume 103 of {\em Graduate Texts in
  Mathematics}.
\newblock Springer-Verlag, New York, fourth edition, 1999.

\bibitem[LS17]{lee_ergodic_2017}
J.~Lee and A.~I. Suriajaya.
\newblock An ergodic value distribution of certain meromorphic functions.
\newblock {\em J. Math. Anal. Appl.}, 445(1):125--138, 2017.

\bibitem[LW09]{lifshits_sampling_2009}
M.~Lifshits and M.~Weber.
\newblock Sampling the {L}indel\"{o}f hypothesis with the {C}auchy random walk.
\newblock {\em Proc. Lond. Math. Soc. (3)}, 98(1):241--270, 2009.

\bibitem[MN04]{melbourne_statistical_2004}
I.~Melbourne and M.~Nicol.
\newblock Statistical properties of endomorphisms and compact group extensions.
\newblock {\em J. London Math. Soc. (2)}, 70(2):427--446, 2004.

\bibitem[MS19]{maugmai_mean-value_2019}
N.~Maugmai and T.~Srichan.
\newblock The mean-value of meromorphic functions with respect to a generalized
  {B}oolean transformation.
\newblock {\em Acta Math. Sin. (Engl. Ser.)}, 35(5):662--670, 2019.

\bibitem[Nag57]{nagaev_limit_1957}
S.~V. Nagaev.
\newblock Some limit theorems for stationary {M}arkov chains.
\newblock {\em Teor. Veroyatnost. i Primenen.}, 2:389--416, 1957.

\bibitem[Pry15]{prykarpatski_ergodic_2015}
A.~K. Prykarpatski.
\newblock Ergodic theory, {B}oole type transformations, dynamical systems
  theory.
\newblock In {\em Current trends in analysis and its applications}, Trends
  Math., pages 325--333. Birkh\"{a}user/Springer, Cham, 2015.

\bibitem[RE82]{rousseau_theoreme_1982}
J.~Rousseau-Egele.
\newblock Un th\'eor\`eme de la limite locale pour une classe de
  transformations dilatantes.
\newblock {\em C. R. Acad. Sci. Paris S\'er. I Math.}, 295(3):305--308, 1982.

\bibitem[Shi82]{shirai_variance_2008}
T.~Shirai.
\newblock Variance of randomized values of {R}iemann's zeta function on the
  critical line.
\newblock {\em RIMS Kokyuroku}, 1590:86--96, 1982.

\bibitem[Ste12]{steuding_sampling_2012}
J.~Steuding.
\newblock Sampling the {L}indel\"{o}f hypothesis with an ergodic
  transformation.
\newblock In {\em Functions in number theory and their probabilistic aspects},
  RIMS K\^{o}ky\^{u}roku Bessatsu, B34, pages 361--381. Res. Inst. Math. Sci.
  (RIMS), Kyoto, 2012.

\bibitem[Tit86]{titchmarsh_theory_1986}
E.~C. Titchmarsh.
\newblock {\em The theory of the {R}iemann zeta-function}.
\newblock The Clarendon Press, Oxford University Press, New York, second
  edition, 1986.
\newblock Edited and with a preface by D. R. Heath-Brown.

\end{thebibliography}
\end{document}